%
%%      Holomorphic structures on the q-projective line
%%      MK, GL, WvS 
%% 
%%      Published in IMRN, last changes June 8, 2010
%%	
%%       
%\documentclass[times,doublespace]{oupau}
\documentclass[a4paper,12pt,reqno]{amsart}
\usepackage{amssymb,amsthm,amsmath,fullpage}
\usepackage[english]{babel}
\usepackage{color}
\input xy
\xyoption{all}
%
%\textheight 24cm
%\textwidth 16cm
%\oddsidemargin 0pt
%\evensidemargin 0pt
%\topmargin -10pt
%\linespread{1.2}
%
%\pdfoutput=1
%
\numberwithin{equation}{section}
%

%%%%%
\newtheorem{theo}{Theorem}[section]
\newtheorem{lemm}[theo]{Lemma}
\newtheorem{prop}[theo]{Proposition}
\newtheorem{defi}[theo]{Definition}
\newtheorem{rema}[theo]{Remark}

\renewcommand{\Im}{{\rm Im}\,}
\renewcommand{\Re}{{\rm Re}\,}
\newcommand{\ii}{{\mathrm{i} }}

% Definitions
\newtheorem{thm}[theo]{Theorem}
\newtheorem{corl}[theo]{Corollary}
\newtheorem{lma}[theo]{Lemma} 
\newtheorem{defn}[theo]{Definition}
\newtheorem{ex}[theo]{Example}

% Walter's commands
\def\A{\mathcal{A}}
\def\Aut{\textup{Aut}}
\def\CP{\mathbb{C}\textup{P}}

\def\del{\partial}
\def\delbar{\bar\partial}
\def\E{\mathcal{E}}
\def\F{\mathcal{F}}
\def\Hom{\textup{Hom}}
\def\L{\mathcal{L}}
\def\N{\mathbb{N}}
\def\nablabar{\overline{\nabla}}
\def\O{\mathcal{O}}

\newcommand{\nn}{\nonumber}
\newcommand{\ce}{\mathcal{E}}
\newcommand{\dd}{{\rm d}}
\newcommand{\ca}{\mathcal{A}}

\newcommand{\cl}{\mathcal{L}}

\newcommand{\pq}{\IC\mathrm{P}^1_{q}}  %% q 2-sphere
\newcommand{\Apq}{\ca(\IC\mathrm{P}^1_{q})}  %% q proj line
\newcommand{\cu}{\mathcal{U}}        %% an enveloping algebra
\newcommand{\SU}{\mathrm{SU}_q(2)}  %% quantum SU(2)
\newcommand{\ASU}{\ca(\mathrm{SU}_q(2))}  %% quantum SU(2)
\newcommand{\sq}{\mathrm{S}^2_{q}}  %% standard Podle\'s sphere
\newcommand{\Asq}{\ca(\mathrm{S}^2_{q})}  %% standard Podle\'s sphere
\newcommand{\su}{\cu_q(\mathrm{su}(2))}  %% quantum su(2)

      %% short for \varepsilon
\newcommand{\cop}{\Delta}           %% coproduct
\newcommand{\co}[2]{#1_{(#2)}}      %% coproduct factor : a_{(1)}
\newcommand{\hs}[2]{\left\langle #1,#2\right\rangle}  %% bilinear pairing

\newcommand{\ket}[1]{\left | #1 \right\rangle }

\newcommand{\oh}{{\tfrac{1}{2}}}
\newcommand{\shalf}{{\scriptstyle\frac{1}{2}}} %% tiny fraction  1/2
\newcommand{\half}{{\mathchoice{\oh}{\oh}{\shalf}{\shalf}}} %% 1/2
\newcommand{\lt}{{\triangleright}}    %% a left action
\newcommand{\rt}{{\triangleleft}}
\newcommand{\IC}{{\mathbb C}} %% complex numbers
\newcommand{\IR}{{\mathbb R}} %% real numbers
 %% natural numbers
\newcommand{\IZ}{{\mathbb Z}} %% integer numbers
       %% ad map
\DeclareMathOperator{\id}{id}       %% identity map
\DeclareMathOperator{\Dom}{Dom}       %%  
\DeclareMathOperator{\End}{End}       %%  
\DeclareMathOperator{\U}{U}       %%  
\DeclareMathOperator{\tr}{tr}       %%  

\newcommand{\abs}[1]{\left|#1\right|}

%%%%%%%%%%%%%%%%%%%%%%%%%%%%%%%%%%%%%%%%%%%%%%%%%%%%%%%%%%%%%%%%%%%%%%%%%%
\newcommand{\figureheight}{8cm}
\newcommand{\putfig}[2]{\begin{figure}[htp]
        \special{isoscale c:/itex/texfig/#1.wmf, \the\hsize \figureheight}
        \vspace{\figureheight}
        \caption{#2}\label{fig:#1}
        \end{figure}}
\newcommand{\pictureheight}{4cm}
\newcommand{\putpicture}[2]{\begin{figure}[htp]
        \special{isoscale c:/itex/texfig/#1.wmf, \the\hsize \pictureheight}
        \vspace{\pictureheight}
        \caption{#2}\label{fig:#1}
        \end{figure}}

\newcommand{\beqa}{\begin{eqnarray}}
\newcommand{\eeqa}{\end{eqnarray}}
\newcommand{\beq}{\begin{equation}}
\newcommand{\eeq}{\end{equation}}

\newcommand{\dol}{\partial}
\newcommand{\dolb}{\bar{\dol}}

\newcommand{\complex}{{\mathbb C}} %% complex numbers
 %% small complex numbers
 %% integers
 %% naturals
 %% real numbers
 %% small real numbers
 %% small integers
 %% rational numbers
%\newcommand{\mat}{{\bb M}} %% matrix algebra
 %% small matrix algebra

 %%operatore 1

%\newcommand{\suq}{SU_{q}\left(2\right)}
%\newcommand{\sq}{S^{2}_{q}}
%\newcommand{\usuq}{U_{q}\left(SU_{q}\left(2\right)\right)}

\newcommand{\mn}{\abs{n}}
\newcommand{\bz}{B_{0}}
\newcommand{\bp}{B_{+}}
\newcommand{\bm}{B_{-}}
%

%TEMPORARY NEWCOMMANDS

\date{8 June 2010}
\title[Holomorphic structures on the quantum projective line]
{Holomorphic structures on \\[10pt] the quantum projective line \\[20pt]}

\author{Masoud Khalkhali, Giovanni Landi and Walter D. van Suijlekom} 
\address{\flushleft Mathematics Department, The University of Western Ontario, London, Ontario, N6A 5B7, Canada}
\email{masoud@uwo.ca}
%\author{Giovanni Landi}
\address{\flushleft Dipartimento di Matematica e Informatica, Universit\`{a} di Trieste,
Via A.~Valerio~12/1, I-34127 Trieste, Italy, and INFN, Sezione di Trieste, Trieste, Italy}
\email{landi@univ.trieste.it}
%\author{Walter D. van Suijlekom} 
\address{\flushleft Institute for Mathematics, Astrophysics and Particle Physics, 
Faculty of Science, Radboud University Nijmegen,
Toernooiveld 1, 6525 ED Nijmegen, The Netherlands}
\email{waltervs@math.ru.nl} 

\subjclass[2000]{58B34; 32L05}
\keywords{Noncommutative geometry, Complex and Holomorphic structures}

\begin{document}%\thispagestyle{empty}

\begin{abstract}

We show  that much of the structure of the 2-sphere as a complex curve survives the $q$-deformation and has natural generalizations to the quantum 2-sphere -- which, with additional structures, we identify with the quantum projective line. Notably among these is the identification of a quantum homogeneous coordinate ring with the coordinate ring of the quantum plane. 
In parallel with the fact that positive Hochschild cocycles on the algebra of smooth functions on a compact oriented 2-dimensional manifold encode 
the information for complex structures on the surface, 
we formulate a notion of twisted positivity for twisted Hochschild and cyclic cocycles and exhibit an explicit twisted positive Hochschild cocycle for  the complex structure on the sphere.
\end{abstract}

\maketitle

%\tableofcontents

%\newpage

\section{Introduction}

Despite much progress in noncommutative geometry in the past 30 years,
noncommutative complex geometry is not developed that much yet. To the best 
of our knowledge the paper \cite{CoCu} is the first outlining 
a possible approach to the idea of a complex structure in noncommutative
geometry, based on the notion of  positive Hochschild cocycle on an involutive algebra.
Other contributions include \cite{FGR98} where noncommutative complex structures motivated by supersymmetric quantum field theory were introduced, and \cite{PS03} where a detailed study of holomorphic structure on noncommutative
 tori and holomorphic vector bundles on them  is carried out. 
 In \cite[Section VI.2]{co94} Connes shows explicitly that positive
Hochschild cocycles on the algebra of smooth functions on a
compact oriented 2-dimensional manifold encode the information
needed to define  a holomorphic structure on the surface. Although
the corresponding problem of characterizing holomorphic structures
on  $n$-dimensional
 manifolds via positive Hochschild cocycles  is still open, nevertheless 
this result  suggests regarding positive Hochschild cocycles as a possible framework for holomorphic
noncommutative  structures. Indeed, this fits very well in the
case of noncommutative tori, as complex structures defined in
\cite{PS03} can be shown to define a
positive Hochschild 2-cocycle on the noncommutative torus \cite{DKL00,DKL03}.

In the present  paper we study a natural complex structure on the Podle\'s quantum 2-sphere -- which, with additional structure, we identify with the quantum projective line $\CP_q^1$ --
that resembles in many aspects the analogous structure on the classical Riemann sphere. 
We shall concentrate on both algebraic and analytic aspects. While at the algebraic level the complex structure we are using on the quantum projective line was already present in \cite{maj05}, we move from this to the analytic level of holomorphic functions and sections. Indeed, it is well known that there are finitely generated projective modules over the quantum sphere that correspond to the canonical line bundles on the Riemann sphere in the $q \to 1$ limit. In this  paper we
  study a holomorphic structure on these projective modules and give explicit bases  for the
  space of corresponding holomorphic sections. Since these projective modules are in fact bimodules we can define, in terms of their tensor products, a quantum homogeneous coordinate ring for $\CP_q^1$. 
  We are able to identify this ring with the coordinate ring of the quantum plane.

In \S\ref{se:hsncg} we define the
notion of a {\it complex structure} on an involutive algebra
as a natural and minimal algebraic requirement on structures that ought to be 
present in any holomorphic structure on a noncommutative space; we also
give several examples, some of which already present in the literature. 
We then define holomorphic structures on modules and bimodules and indicate, 
in special cases, a tensor product for bimodules. 
In  \S\ref{se:qhb} we
look at the quantum projective line $\pq$ and its holomorphic structure
defined via a differential calculus. This differential calculus, induced from the canonical 
left covariant  differential calculus on the quantum ``group'' $\SU$, is the unique left covariant one on $\pq$.
 In \S\ref{se:hsqpl} we compute explicit bases for the space of holomorphic sections of
 the canonical line bundles $\mathcal{L}_n$ on $\pq$ and we notice that they follow a
 pattern similar to the classical commutative case. This allows us to compute
 the quantum homogeneous coordinate ring of $\pq$, and to show that it coincides with 
 the coordinate ring of the quantum plane. 
In \S\ref{se:tphc} 
we look for a possible positive Hochschild cocycle on the quantum sphere.
Given that there are no non-trivial 2-dimensional cyclic cocycles on the quantum 2-sphere  
we formulate a notion of twisted positivity for twisted Hochschild and cyclic cocycles 
and show that a natural twisted Hochschild cocycle is positive.
The twist here  is induced on $\pq$ by the modular automorphism of the
quantum $\SU$. 
%It is interesting to note that the class of this
%2-cocycle in twisted Hochschild cohomology is trivial.

\section{Holomorphic structures in noncommutative geometry}\label{se:hsncg}
We start with a general setup for complex and holomorphic structures in noncommutative geometry. In the next sections, we will apply this to the Podle\'s sphere seen as a quantum Riemann sphere. We try to introduce a scheme that applies also to other noncommutative holomorphic structures already present in the literature. In particular, we have in mind the holomorphic structure on the noncommutative torus that was introduced in \cite{CR87} and further explored 
in \cite{PS03}. Also, we require compatibility with the definitions in \cite{FGR98} of noncommutative complex and K\"ahler manifolds that originated from supersymmetric quantum theory. 

\subsection{Noncommutative complex structures}\label{se:nccs}
~\\
Suppose $\A$ is an algebra over $\IC$ equipped with a {\it differential $*$-calculus} $(\Omega^\bullet(\A),\dd)$. Recall that this is a graded differential $*$-algebra $\Omega^\bullet(\A) = \oplus_{p\geq 0} \Omega^p(\A)$ with $\Omega^0(\A)=\A$, thus $\Omega^\bullet(\A)$ has the structure of an $\A$-bimodule. The differential $\dd: \Omega^\bullet (\A) \to \Omega^{\bullet+1} (\A)$ satisfying a graded Leibniz rule, $\dd (\alpha \beta) = (\dd \alpha) \beta + (-1)^{\mathrm{dg}(\alpha)} \alpha (\dd \beta)$ and 
$\dd^2=0$. Also the differential commutes with the $*$-structure: 
$\dd(a^*) = \dd(a)^*$. 
\begin{defn}
\label{defn:compl-str}
A {\rm complex structure} on $\A$  for the differential calculus $(\Omega^\bullet(\A), \dd)$ is a bigraded differential $*$-algebra $\Omega^{(\bullet,\bullet)}(\A)$ with two differentials $\del: \Omega^{(p,q)}(\A) \to \Omega^{(p+1,q)}(\A)$ and $\delbar: \Omega^{(p,q)}(\A) \to \Omega^{(p,q+1)}(\A)$ ($p,q \geq0$) such that the following hold:
$$
\Omega^n (\A)= \bigoplus_{p+q=n} \Omega^{(p,q)}(\A); \qquad \del(a)^* = \delbar(a^*);\qquad \text{ and } \qquad \dd=\del+\delbar.
$$
Also, the involution $*$ maps  $\Omega^{(p,q)}(\A)$ to $\Omega^{(q,p)}(\A)$.
\end{defn}
\noindent
In the following, we will also abbreviate $(\A,\delbar)$ for a complex structure on $\A$. Note that the complex $(\Omega^{(0,\bullet)}(\A) , \delbar)$ forms a differential calculus as well. 

\begin{rema}
\textup{
For the purpose of the present paper the above definition suffices. In general one may need to add some kind of integrability condition. We postpone this to future work. These lines of research are also being pursued  in \cite{BS}. 
}
\end{rema}

\begin{defn}
\label{defn:holo-funct}
Let $(\A,\delbar)$ be an algebra with a complex structure. The {\rm algebra of holomorphic elements} in $\A$ is defined as
$$
\O(\A) := \ker \left\{ \delbar: \A \to \Omega^{(0,1)}(\A) \right\}.
$$
\end{defn}
\noindent
In the following, we will also loosely speak about $\O(\A)$ as holomorphic functions. Note that by the Leibniz rule this is indeed an algebra over $\IC$.

\begin{ex}
\textup{
The motivating example is the de Rham complex (with complex coefficients) on an complex manifold. It is of course a complex structure in the above sense for the complexified de Rham differential calculus. 
}
\end{ex}

\begin{ex}\label{ex:2}
\textup{
 Let $L$ be a {\it real} Lie algebra  with
a {\it complex structure}:
$$ L^{\mathbb{C}} =: L_0 \oplus \overline{L_0}$$
 Given $ L \to \text{Der} (\A, \A),$ an action of $L$ by $*$-derivations on
an involutive algebra $\A$,  then the Chevalley--Eilenberg complex
$$ \Omega^{\bullet} \A := \Hom_\IC(\Lambda^\bullet L^{\mathbb{C}}, \A),$$
for the Lie algebra cohomology of $L^{\mathbb{C}}$ with  coefficients in $\A$, 
is a differential calculus for $\A$. A complex structure on $\A$ for this differential calculus is defined by setting 
$$\Omega^{(p, q)} \A :=\Hom_\IC(\Lambda^p L_0 \otimes \Lambda^q
\overline{L_0}, \A)$$
as the space of $(p,q)$-forms.
}
\end{ex}

\begin{ex}
\textup{
The noncommutative torus $\A_\theta$ is defined as the involutive algebra generated by two unitaries $U_1, U_2$ satisfying $U_1 U_2 = e^{2 \pi \ii \, \theta} U_2 U_1$, for a fixed $\theta \in \IR$. The two basic derivations on this torus are given by $\delta_j (U_k) = 2 \pi \ii \, \delta_{jk} U_k$, for $j,k=1,2$, and generate an action of the abelian Lie algebra $\IR^2$ on $\A_\theta$. 
 Any $\tau \in
\mathbb{C}\setminus  \mathbb{R}$  defines a complex structure on the Lie algebra $\IR^2$:
$$ \IR^2 \otimes \mathbb{C}= L_0 \oplus \overline{L_0}$$
where $L_0:= e_1 + \tau e_2$ with $(e_1,e_2)$ the standard basis of $\IR^2$.
The construction of Example~\ref{ex:2} then yields a complex structure on the noncommutative torus which was already present in \cite{CR87}. The only holomorphic functions are the constants.
\\
Now, the conformal class of a general constant metric in two dimensions is parametrized
by a complex number $\tau \in \IC$, $\Im \tau > 0$. Up to a
conformal factor, the metric is given by
\begin{equation}\label{met}
g = (g_{ij}) =
\begin{pmatrix}
1 & \Re\tau \\
\Re\tau & \abs{\tau}^2
\end{pmatrix}~.
\end{equation}
The complex structure on $\ca_\theta$ is then given by
\begin{equation}\label{comt2act}
\dol_{(\tau)} = \frac{1}{(\tau - \overline{\tau})} \;(- \overline{\tau} \delta_1 + \delta_2), \qquad
\overline{\dol}_{(\tau)} = \frac{1}{(\tau - \overline{\tau})} ~(\tau \delta_1 - \delta_2).
\end{equation}
The cyclic 2-cocycle which `integrates' $2$-forms on the noncommutative torus is 
\begin{equation}\label{fc}
\Psi(a_{0},a_{1},a_{2})=\frac{ \ii }{2 \pi } \tr_\theta \left( a_{0} 
(\delta_{1}a_{1} \delta_{2}a_{2} - \delta_{2}a_{1}\delta_{1}a_{2} ) \right) ,
\end{equation}
where $\tr_\theta$ indicates the unique invariant normalized faithful trace on $\ca_\theta$. Its normalization ensures that for any hermitian idempotent $p\in\ca_{\theta}$, the quantity $\Psi(p,p,p)$ is an integer: it is indeed the index of a Fredholm operator. By working with the metric~\eqref{met}, the positive Hochschild cocycle $\Phi$ associated with the cyclic one \eqref{fc} is given by
\begin{equation}\label{phc}
\Phi(a_{0},a_{1},a_{2})=
\frac{2}{\pi} \tr_\theta \left( a_{0}\del_{(\tau)} a_{1}\overline{\del}_{(\tau)}a_{2} \right).
\end{equation}
The cocycle \eqref{phc} seen as the conformal class of a general constant metric on the torus is in \cite[Section VI.2]{co94}. We return to this later on in \S\ref{se:tphc}.
}
\end{ex}

\begin{ex}
\textup{
The hermitian spectral data introduced in \cite{FGR98} in a noncommutative 
geometry approach to supersymmetric field theories, is an example of a noncommutative complex structure. We refer in particular to Corollary 2.34 there for more details. 
}
\end{ex}

\subsection{Holomorphic connections on modules}
~\\
We will now lift the complex structures described in the previous section to noncommutative vector bundles. This requires the introduction of connections on (left or right) $\A$-modules. We will work with left module structures, although this choice is completely irrelevant. Recall that a {\it connection} on a left $\A$-module $\E$ for the differential calculus $(\Omega^\bullet(\A), \dd)$ is a linear map $\nabla: \E \to \Omega^1(\A) \otimes_\A \E$ satisfying the (left) Leibniz rule:
$$
\nabla(a \eta) = a \nabla(\eta) + \dd a \otimes_\A \eta, \qquad \textup{for} \quad a \in \A, \eta \in \E.
$$
We can extend the connection to a map $\nabla: \Omega^p(\A) \otimes_\A \E \to \Omega^{p+1}(\A) \otimes_\A \E$ via the graded (left) Leibniz rule: 
\begin{equation}
\label{graded-leibniz}
\nabla(\omega \rho) =(-1)^p \omega \nabla(\rho) + (\dd \omega)  \rho, \qquad \textup{for} \quad \omega \in \Omega(\A), \quad \rho \in \Omega^p(\A) \otimes_\A \E .
\end{equation}
The {\it curvature} of the connection is defined as $F(\nabla) = \nabla^2$; one shows that it is left $\A$-linear, {i.e.} it is an element in $\Hom_\A(\E, \Omega^2(\A)\otimes_\A \E)$.

\begin{defn}
\label{defn:holo-str}
Let $(\A,\delbar)$ be an algebra with a complex structure. A {\rm holomorphic structure} on 
a left $\A$-module $\E$ with respect to $(\A,\delbar)$ is a flat $\delbar$-connection, i.e. 
a linear map $\nablabar: \E \to \Omega^{(0,1)}(\A) \otimes_\A \E$ satisfying 
\begin{align}
\label{leibniz}
\nablabar(a \eta) = a \nablabar(\eta) + \delbar a \otimes_\A \eta, \qquad \textup{for} \quad a \in \A, \eta \in \E .
\end{align}
and such that $F({\nablabar}) = \nablabar^2=0$.\\
If in addition $\E$ is a finitely generated projective $\A$-module, we shall call the pair $(\E,\nablabar)$ a holomorphic vector bundle. 
\end{defn}
The last part of the definition is motivated by the classical case: a vector bundle on a complex manifold is holomorphic if and only if it admits a flat $\delbar$-connection. Indeed, there is a one-to-one correspondence between holomorphic structures on vector bundles over a complex manifold and (equivalence classes of) flat $\delbar$-connections. The equivalence is with respect to gauge transformation, a concept which can be generalized to the noncommutative setup as well.

\begin{defn}\label{defn:gauge}
Two holomorphic structures $\nablabar_1$ and $\nablabar_2$ on an $\A$-module $\E$ are {\rm gauge equivalent} if there exists an invertible element $g \in \End_\A(\E)$ such that $\nablabar_2 = g^{-1} \circ \nablabar_1 \circ g$.
\end{defn}
More generally, it follows from the Leibniz rule \eqref{leibniz} that the difference of any two connections is $\A$-linear, {i.e.} for $\nablabar_1$ and $\nablabar_2$ holomorphic structures on $\E$, it holds that 
\begin{equation}
\label{diff-conn}
\nabla_1 - \nabla_2  \in \Hom_\A( \E, \Omega^{(0,1)}(\A) \otimes_\A \E)
\end{equation}
In particular,  for finitely generated projective $\A$-modules a $\nablabar$-connection is given by an element $A \in \Hom_\A( \E, \Omega^{(0,1)} \otimes_\A \E)$. Indeed, such a module $\E$ is a direct summand of a free $\A$-module and so inherits a connection $\nabla_0$ from the trivial connection $\delbar$ on the free module (acting diagonally, after identifying the free module with $N$ copies of $\A$). By Eq.~\eqref{diff-conn}, any other connection on $\E$ is then given by $\nablabar_0 + A$ with $A$ as said. We call $A$ the {\it connection (0,1)-form}. 

\medskip

Since $\nablabar$ is a flat connection, there is the following complex of vector spaces
\begin{equation}
\label{complex}
0 \to \E \overset{\nablabar} \to \Omega^{(0,1)}(\A) \otimes_\A \E \overset{\nablabar} \to \Omega^{(0,2)}(\A) \otimes_\A \E \overset{\nablabar} \to \, \cdots
\end{equation}
where $\nablabar$ is extended to $\Omega^{(0,q)}(\A) \otimes_\A \E$ by a Leibniz rule similar to Eq.~\eqref{leibniz}. 

\begin{defn}
\label{defn:holo-sect}
The $j$-th cohomology group of the complex \eqref{complex} is 
denoted by $H^j(\E,\nablabar)$. In particular the zeroth cohomology group $H^0(\E,\nablabar)$  is called the {\rm space of holomorphic sections of $\E$}.
\end{defn}
As a consequence of the Leibniz rule \eqref{leibniz}, the space $H^0(\E,\nablabar)$ is a left $\O(\A)$-module. 

\subsection{Holomorphic structures on bimodules and their tensor products}
~\\
In the previous section we focused on connections on modules carrying a left algebra action. Now, we would like to define tensor products of connections on bimodules, and for that we need some compatibility with the right action of the algebra. The following definition was proposed in \cite{Mou95} for linear connections and \cite{DM96} for the general case. See also 
\cite[Section II.2]{And01}.
\begin{defn}
A {\rm bimodule connection} on an $\A$-bimodule $\E$ for the calculus $(\Omega^\bullet(\A), \dd)$ is given by a connection $\nabla : \E \to \Omega^1(\A) \otimes_\A \E$ (for the left module structure) for which there is a bimodule isomorphism
$$
\sigma(\nabla): \E \otimes_\A \Omega^ 1 (\A) \to \Omega^ 1 (\A) \otimes_\A \E,
$$
such that 
the following twisted right Leibniz rule holds
\begin{equation}
\label{twisted-leibniz}
\nabla(\eta a) = \nabla(\eta) a + \sigma(\nabla)\left( \eta \otimes \dd a \right), \qquad \textup{for} \quad
\eta \in \E, \, a \in \A . 
\end{equation}
\end{defn}
\noindent
In particular, this definition applies to the differential calculus $(\Omega^{(0,\bullet)}(\A), \delbar)$ thus giving a notion of {\it holomorphic structure on bimodules}. 

Next, suppose we are given two $\A$-bimodules $\E_1,\E_2$ with two bimodule connections $\nabla_1, \nabla_2$, respectively. Denote the corresponding bimodule isomorphisms by $\sigma_1$ and $\sigma_2$. The following result establishes their {\it tensor product connection}.
\begin{prop}
The map $\nabla: \E_1 \otimes_\A \E_2 \mapsto \Omega^1(\A) \otimes_\A \E_1 \otimes_\A \E_2$ defined by
$$
\nabla := \nabla_1 \otimes 1 + (\sigma_1 \otimes \id)(1 \otimes \nabla_2)
$$
yields a $\sigma$-compatible connection on the $\A$-bimodule $\E_1 \otimes_\A \E_2$, with bimodule isomorphism 
$\sigma: \E_1 \otimes_\A \E_2 \otimes_\A \Omega^1(\A) \to  \Omega^1(\A) \otimes_\A \E_1 \otimes_\A \E_2$. defined as the composition $(\sigma_1 \otimes 1) \circ (1 \otimes \sigma_2)$. 
\end{prop}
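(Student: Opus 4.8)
The plan is to check, in turn, that the stated formula for $\sigma$ is a bimodule isomorphism of the required type, that $\nabla$ is well defined on the balanced tensor product $\E_1 \otimes_\A \E_2$, that it obeys the left Leibniz rule, and finally that it satisfies the twisted right Leibniz rule \eqref{twisted-leibniz} with precisely this $\sigma$. The first point is immediate: writing $\sigma = (\sigma_1 \otimes 1)\circ(1 \otimes \sigma_2)$, the map $1\otimes\sigma_2$ sends $\E_1 \otimes_\A \E_2 \otimes_\A \Omega^1(\A)$ to $\E_1 \otimes_\A \Omega^1(\A) \otimes_\A \E_2$ and $\sigma_1 \otimes 1$ sends the latter to $\Omega^1(\A)\otimes_\A \E_1 \otimes_\A \E_2$; each is a bimodule isomorphism because $\sigma_1,\sigma_2$ are, so their composition is one as well. (Here and below $\sigma_i \otimes \id$ and $\sigma_i \otimes 1$ denote the same map, the identity acting on the remaining factor.)

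The crux, and the step I expect to be the main obstacle, is well-definedness over $\otimes_\A$. I would first read the formula $\nabla = (\nabla_1 \otimes 1) + (\sigma_1 \otimes \id)(1 \otimes \nabla_2)$ as a map on the algebraic tensor product $\E_1 \otimes_\IC \E_2$ and then show it annihilates every relation $\eta_1 a \otimes \eta_2 - \eta_1 \otimes a\eta_2$. Neither summand descends on its own. Using the twisted right Leibniz rule for $\nabla_1$, the first summand evaluated on such a relation leaves the defect $\sigma_1(\eta_1 \otimes \dd a)\otimes\eta_2$; using the left Leibniz rule for $\nabla_2$, the operator $1\otimes\nabla_2$ leaves the defect $-\,\eta_1 \otimes \dd a \otimes \eta_2$, which $\sigma_1\otimes\id$ turns into $-\,\sigma_1(\eta_1\otimes\dd a)\otimes\eta_2$. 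The two defects cancel, so $\nabla$ passes to $\E_1\otimes_\A\E_2$. Throughout, the only subtlety is the bookkeeping of moving an element $a\in\A$ across the successive tensor factors over $\A$, and the bimodule property of the $\sigma_i$ is exactly what keeps this consistent.

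For the left Leibniz rule I would evaluate $\nabla$ on $a(\eta_1\otimes\eta_2)=(a\eta_1)\otimes\eta_2$. The left Leibniz rule for $\nabla_1$ gives $(\nabla_1\otimes 1)((a\eta_1)\otimes\eta_2)=a\,(\nabla_1\eta_1)\otimes\eta_2+\dd a\otimes\eta_1\otimes\eta_2$, while the left $\A$-linearity of $\sigma_1$ shows that the second summand simply reproduces $a$ times its value on $\eta_1\otimes\eta_2$. Summing yields $a\,\nabla(\eta_1\otimes\eta_2)+\dd a\otimes(\eta_1\otimes\eta_2)$, as required.

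Finally, for the twisted right Leibniz rule I would evaluate $\nabla$ on $(\eta_1\otimes\eta_2)a=\eta_1\otimes(\eta_2 a)$ and invoke the twisted right Leibniz rule for $\nabla_2$, namely $\nabla_2(\eta_2 a)=\nabla_2(\eta_2)a+\sigma_2(\eta_2\otimes\dd a)$. The term coming from $\nabla_1\otimes 1$ together with the $\nabla_2(\eta_2)a$ contribution combine, after commuting the right action of $a$ past $\nabla_1\otimes 1$ and $\sigma_1\otimes\id$, into $\nabla(\eta_1\otimes\eta_2)\,a$. The remaining contribution $(\sigma_1\otimes\id)(\eta_1\otimes\sigma_2(\eta_2\otimes\dd a))$ is, by the very definition of $\sigma$, equal to $\sigma((\eta_1\otimes\eta_2)\otimes\dd a)$. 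This completes the verification and, at the same time, explains why $\sigma$ must be taken to be $(\sigma_1\otimes 1)\circ(1\otimes\sigma_2)$.
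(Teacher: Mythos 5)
Your proof is correct and follows essentially the same route as the paper's (much terser) argument: left Leibniz from $\nabla_1$, well-definedness over $\otimes_\A$ from the cancellation between the twisted right Leibniz rule for $\nabla_1$ and the left Leibniz rule for $\nabla_2$, and $\sigma$-compatibility from the twisted Leibniz rule for $\nabla_2$ together with the bimodule property of the $\sigma_i$. You have simply supplied the details that the paper leaves to the reader.
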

\begin{proof}
Clearly, this map satisfies the left Leibniz rule, since $\nabla_1$ does. Well-definedness of the map on $\E_1 \otimes_\A \E_2$ follows by an application of the twisted Leibniz rule \eqref{twisted-leibniz} for $\nabla_1$ and of the left Leibniz rule for $\nabla_2$. Also, $\sigma$-compatibility follows from the twisted Leibniz rule \eqref{twisted-leibniz} and the fact that the 
$\sigma_i$ are bimodule maps.
\end{proof}
It would be desirable that the flatness condition on holomorphic structures survives upon taking the tensor product. Unfortunately, this is not the case for the above tensor product, as shown by the following example. 

\begin{ex}
\textup{
Let $\E = \A \otimes V $ and $\F = \A \otimes W$ be two free $\A$-bimodules, with $V$ and $W$ vector spaces. Since a connection on a free module is determined by its action on basis vectors $e_i$ of $V$ and $f_j$ of $W$, respectively, we define connections $\nabla_\E$ and $\nabla_\F$ on $\E$ and $\F$ by two matrices of one-forms $A_{ik}$ and $B_{jl}$ respectively. The tensor product connection is then determined by
$$
\nabla(e_i \otimes f_j) = \sum\nolimits_k A_{ik} \otimes e_k \otimes f_j + 
\sum\nolimits_l B_{jl} \otimes e_i \otimes f_l.
$$
One then computes that its curvature is
\begin{multline*}
\nabla^2(e_i \otimes f_j) = \sum\nolimits_k F(\nabla_\E)_{ik} \otimes (e_k \otimes f_j) + \sum\nolimits_l F(\nabla_\F)_{jl} \otimes e_i \otimes f_l \\
- \sum\nolimits_{kl} (A_{ik} B_{jl} + B_{jl} A_{ik}) \otimes (e_k \otimes f_l)
\end{multline*}
with $F(\nabla_\E)_{ik}= \dd A_{ik} + \sum\nolimits_r A_{ir} A_{rk}$ and similarly for the curvature of $\nabla_\F$. Thus, the curvature of the tensor product connection is not the sum of the curvatures of the two connections, unless of course the differential calculus is (graded) commutative. Indeed, in the latter case, the term $A_{ik} B_{jl} + B_{jl} A_{ik}$ vanishes. 
}
\end{ex}

Possibly, a modification of the tensor product could overcome the problem. 
For the complex one-dimensional case of interest for this paper, we can ignore the problem since then flatness of any holomorphic connection is automatic. 

\section{The Hopf bundle on the quantum projective line}\label{se:qhb}

The most natural way to define the quantum projective line $\pq$ is as a quotient of the sphere $\mathrm{S}^3_q$ for an action of $\U(1)$. It is the standard Podle\'s sphere 
$\sq$ with additional structure and the construction we need is the well known quantum principal $\U(1)$-bundle  over the standard Podle\'s sphere $\sq$ and whose total space is the manifold of the quantum group $\SU$. This bundle is an example of a quantum homogeneous space \cite{BM93}. 
In the following, without loss of generality we will assume that $0<q<1$. 
We shall also use the `$q$-number' 
\begin{equation}
[s] = [s]_q := \frac{q^{-s} - q^s}{q^{-1} - q} ,
\label{eq:q-integer}
\end{equation}
defined for $q \neq 1$ and any $s \in \IR$.

\subsection{The algebras of $\mathrm{S}^3_q$ and $\pq$}\label{qdct}
~\\
The manifold of $\mathrm{S}^3_q$ is identified with the manifold of the quantum group $\SU$. Its coordinate algebra $\ASU$ is the $*$-algebra generated by elements $a$ and~$c$, with relations,
\begin{align}\label{derel}
& ac=qca , \quad ac^*=qc^*a ,\quad cc^*=c^*c , \nn \\
& a^*a+c^*c=aa^*+q^{2}cc^*=1 .
\end{align}
These are equivalent to requiring that the `defining' matrix 
$$
U = \left(
\begin{array}{cc} a & -qc^* \\ c & a^*
\end{array}\right),
$$
is unitary: $U U^* = U^* U = 1$. The Hopf algebra structure for $\ASU$ is given by coproduct, antipode and counit:
$$\Delta\, U = U \otimes U , \qquad S(U) = U, \qquad \epsilon(U) =1.
$$ 

The quantum universal enveloping algebra $\su$ is the Hopf $*$-algebra
%over $\complex$
generated as an algebra by four elements $K,K^{-1},E,F$ with $K K^{-1}=1$ and  subject to
relations: 
\beq \label{relsu} 
K^{\pm}E=q^{\pm}EK^{\pm}, \qquad 
K^{\pm}F=q^{\mp}FK^{\pm}, \qquad
[E,F] =\frac{K^{2}-K^{-2}}{q-q^{-1}}. 
\eeq 
The $*$-structure is simply
$$
K^*=K, \qquad E^*=F, \qquad F^*=E, 
$$
and the Hopf algebra structure is provided  by coproduct $\Delta$, antipode $S$, counit $\epsilon$: 
$$
\begin{array}{cc}
\Delta(K^{\pm}) =K^{\pm}\otimes K^{\pm},
\qquad \Delta(E) =E\otimes K+K^{-1}\otimes E, \qquad \Delta(F)
=F\otimes K+K^{-1}\otimes F , 
\\~ \\
S(K) =K^{-1},
\qquad S(E) =-qE, \qquad S(F) =-q^{-1}F , 
\\~\\
\epsilon(K)=1, \qquad \epsilon(E)=\epsilon(F)=0 .
\end{array}
$$
There is a bilinear pairing between $\su$ and $\ASU$, given on
generators by
\begin{align*}
&\langle K,a\rangle=q^{-1/2}, \quad \langle K^{-1},a\rangle=q^{1/2}, \quad
\langle K,a^*\rangle=q^{1/2}, \quad \langle K^{-1},a^*\rangle=q^{-1/2}, \nn\\
&\langle E,c\rangle=1, \quad \langle F,c^*\rangle=-q^{-1},
\end{align*}
and all other couples of generators pairing to~0. One regards $\su$ as a subspace of the linear dual of~$\ASU$ via this
pairing. Then there are \cite{wor87} canonical left and right $\su$-module algebra
structures on~$\ASU$  such that
$$
\hs{g}{h \lt x} := \hs{gh}{x},  \quad  \hs{g}{x \rt  h} := \hs{hg}{x},
\qquad \forall\, g,h \in \su,\ x \in \ASU.
$$
They are given by $h \lt x := \hs{(\id \otimes h)}{\cop x}$ and
$x \rt  h := \hs{(h \otimes \id)}{\cop x}$, or equivalently, 
$$
h \lt x := \co{x}{1} \,\hs{h}{\co{x}{2}}, \qquad
x \rt  h := \hs{h}{\co{x}{1}}\, \co{x}{2},
$$
in the Sweedler notation, $\Delta(x)=\co{x}{1} \otimes {\co{x}{2}}$, for the coproduct.
These right and left actions are mutually commuting:
$$
(h \lt a) \rt  g = \left(\co{a}{1} \,\hs{h}{\co{a}{2}}\right) \rt  g 
= \hs{g}{\co{a}{1}} \,\co{a}{2}\, \hs{h}{\co{a}{3}} 
= h \lt \left(\hs{g}{\co{a}{1}}\, \co{a}{2}\right) = h \lt (a \rt  g),
$$
and since the pairing satisfies 
$$
\hs{(Sh)^*}{x} = \overline{\hs{h}{x^*}},
\qquad \forall\, h \in \su,\ x \in \ASU ,
$$
the $*$-structure is compatible with both actions:
$$
h \lt x^* = ((Sh)^* \lt x)^*,  \quad  x^* \rt  h = (x \rt  (Sh)^*)^*,
\qquad \forall\,  h \in \su, \ x \in \ASU.
$$
We list here the left action on powers of generators. For $s=0,1, \dots$, one finds:
\begin{align}\label{lact}
& K^{\pm}\lt a^{s} =q^{\mp\frac{s}{2}}a^{s}, \quad
K^{\pm}\lt a^{* s} =q^{\pm\frac{s}{2}}a^{* s}, \quad
K^{\pm}\lt c^{s} =q^{\mp\frac{s}{2}}c^{s}, \quad
K^{\pm}\lt c^{* s} =q^{\pm\frac{s}{2}}c^{* s} ;  \\
& F\lt a^{s} =0, \quad  F\lt
a^{*s} =q^{(1-s)/2} [s] c a^{* s-1}, \quad
F\lt c^{s} =0, \quad
F\lt c^{*s} =-q^{-(1+s)/2} [s] a c^{*s-1} ; \nn \\
& E\lt a^{s} =-q^{(3-s)/2} [s] a^{s-1} c^{*} , \quad
E\lt a^{* s} =0 , \quad
E\lt c^{s} =q^{(1-s)/2} [s]  c^{s-1} a^*, \quad
E\lt c^{* s} =0 . \nn
\end{align}

\medskip
The principal bundle structure comes from an additional (right)
action of the group $\U(1)$ on $\SU$, given via a map 
$\alpha: \U(1) \to \Aut(\ASU)$, explicit on generators by 
\beq \label{qprp}
\alpha_u \left(
\begin{array}{cc} a & -qc^* \\ c & a^*
\end{array}\right)= \left(
\begin{array}{cc} a & -qc^* \\ c & a^*
\end{array}\right) \left(
\begin{array}{cc} u & 0 \\ 0 & u^*
\end{array}\right) , \qquad \textup{for} \quad u\in\U(1) .
\eeq
The invariant elements for this
action form a subalgebra of $\ASU$ which is by definition the coordinate algebra
$\Asq$ of the standard Podle\'s sphere $\sq$ of \cite{Po87}.
The inclusion $\Asq\hookrightarrow\ASU$ is a quantum principal bundle \cite{BM93} with classical structure group $\U(1)$. Moreover, the sphere $\sq$ (or the projective line $\pq$ then) is a
quantum homogeneous space of $\SU$ and the (left) coaction of $\ASU$ on itself
restricts to a left coaction $\Delta_L : \Asq \to \ASU \otimes \Asq$; dually, it survives a right action 
of $\su$ on $\Asq$ as twisted derivations. 

As a set of generators for $\Asq$ we may take 
\beq \label{podgens}
B_{-} := ac^* , \qquad 
B_{+} := ca^* , \qquad 
B_{0} :=   cc^*,\eeq 
for which one finds relations:
\begin{align*}
B_{-}B_{0} &= q^{2} B_{0}B_{-}, \\
B_{-}B_{+} &= q^2 B_{0} ( 1 - q^2 B_{0} ), \qquad B_{+}B_{-}= B_{0} ( 1 - B_{0} ) ,
\end{align*}
and $*$-structure: $(B_{0})^*=B_{0}$ and $(B_{+})^*= B_{-}$.

Later on in \S\ref{se:cotqpb} we shall describe a natural complex structure on the quantum 2-sphere $\sq$ for the unique 2-dimensional covariant calculus on it. This will transform the sphere $\sq$ into a quantum Riemannian sphere or quantum projective line $\pq$. Having this in mind, with a slight abuse of `language' from now on we will speak of $\pq$ rather than $\sq$.

\subsection{The $C^*$-algebras $C(\SU)$ and $C(\CP_q^1)$}
\label{subsection:hilbert}
~\\
We recall \cite{KlimykS} that the algebra $\A(\SU)$ has a vector-space basis
consisting of matrix elements of its irreducible corepresentations,
$\{t^l_{mn} : 2l \in \N,\ m,n = -l,\dots,l-1,l\}$; in particular
\begin{equation}
t^0_{00} = 1, \qquad
t^\half_{-\half,-\half} = a, \qquad
t^\half_{\half,-\half} = c.
\label{eq:basis}
\end{equation}
The coproduct has the matricial form
$\cop t^l_{mn} = \sum_k t^l_{mk} \otimes t^l_{kn}$, while the product is
\begin{equation}
t^j_{rs} t^l_{mn} = \sum_{k=|j-l|}^{j+l}
C_q \! \begin{pmatrix} j&l&k \\ r&m&{r+m}\end{pmatrix}
C_q \! \begin{pmatrix} j&l&k \\ s&n&{s+n}\end{pmatrix} t^k_{r+m,s+n},
\label{eq:matelt-prod}
\end{equation}
where the $C_q(-)$ factors are $q$-Clebsch--Gordan coefficients
\cite{BiedenharnLo,KirillovR}.

We denote  by $C(\SU)$ the $C^*$-completion of $\A(\SU)$; it is the universal $C^*$-algebra generated by $a,a^*,c, c^*$ subject to the relations \eqref{derel}. One of the main features of this compact quantum group is the existence of a unique (left) invariant Haar state \cite{wor87} that
%The Haar state on the $C^*$-completion $C(SU_q(2))$ of $\A(\SU)$, which w
we shall denote by $h$. This state is faithful and is determined by setting $h(1) := 1$ and $h(t^l_{mn}) := 0$ if $l > 0$. Let $L^2(\SU) := L^2(\SU,h)$ be the Hilbert space of its GNS representation $\pi$; then the GNS map
$\eta : C(\SU) \to L^2(\SU)$ is injective and satisfies
\begin{equation}
\|\eta(t^l_{mn})\|_0^2 := h((t^l_{mn})^* \, t^l_{mn})
= \frac{q^{-2m}}{[2l+1]},
\label{eq:matelt-norm}
\end{equation}
and the vectors $\eta(t^l_{mn})$ are mutually orthogonal. From the
formula
$$
C_q \! \begin{pmatrix} l&l&0 \\ {-m}&m&0 \end{pmatrix} = (-1)^{l+m} \frac{q^{-m}}{[2l+1]^\half},
$$
we see that the involution in $C(\SU)$ is given by
\begin{equation}
(t^l_{mn})^* = (-1)^{2l+m+n} q^{n-m} \, t^l_{-m,-n}.
\label{eq:matelt-star}
\end{equation}
In particular, $t^\half_{-\half,\half} = - q c^*$ and 
$t^\half_{\half,\half} = a^*$, as expected.

An orthonormal basis of $L^2(\SU)$ is obtained by normalizing the
matrix elements:
\begin{equation}
\ket{lmn} := q^m \,[2l+1]^\half \,\eta(t^l_{mn}) , 
\label{eq:matelt-onb}
\end{equation}
for $2l \in \N$ and $m,n = -l, \dots, l-1, l$.

Recall \cite{KlimykS} that the irreducible representations of $\su$ are labeled by a nonnegative half-integers $l$; the corresponding representation spaces $V_l$ are of dimension $2l+1$. By construction, the Hilbert space $L^2(\SU)$ is (the completion of) $\bigoplus_l V_l \otimes V_l$. This gives two commuting representations of $\su$ on $L^2(\SU)$ of which we need only one:
\begin{align}
\sigma(K)\,\ket{lmn} &= q^n \,\ket{lmn},
\nn \\
\sigma(E)\,\ket{lmn} &= \sqrt{[l-n][l+n+1]} \,\ket{lm,n+1},
\label{eq:uqsu2-repns} \\
\sigma(F)\,\ket{lmn} &= \sqrt{[l-n+1][l+n]} \,\ket{lm,n-1}.
\nn
\end{align}
The second one would move the label $m$ while not changing $n$.
The representation is such that the left regular representation $\pi$ of $C(\SU)$ is equivariant with respect to the left $\su$-action, {i.e.}
$$
\sigma(g) \pi (f) = \pi ( g \lt f) , \qquad \textup{for} \quad g \in \su, \; f \in C(\SU) ,
$$
in their action on $L^2(\SU)$. There is a similar statement with respect to the right action of $\su$ on $C(\SU)$ for which the representation of the former involves the spin index $m$ rather than $n$.
As before on the polynomial level, the action of the element $K \in \su$ on $C(\SU)$ is closely related to a right 
$\U(1)$-action. Indeed, the formula \eqref{qprp} extends to an action of $\U(1)$ on $C(\SU)$ by automorphisms.
The invariant subalgebra in $C(\SU)$ under this $\U(1)$-action is by definition the $C^*$-algebra of the Podle\'s sphere, or, in our notation, $C(\CP_q^1)= C(\SU)^{\U(1)}$. 
%Corresponding to the exact sequence
%$$
%0 \to \mathcal{K} \otimes C(S^1) \to C(\SU) \to C(S^1) \to 0
%$$
%with $\mathcal{K}$ the algebra of compact operators on a separable Hilbert space, we have the following characterization $C(\CP_q^1) \simeq \mathcal{K} \oplus \complex$

The corresponding action of $\U(1)$ on the Hilbert space $L^2(\SU)$ reads (compare with the first equation in \eqref{eq:uqsu2-repns}):
\begin{equation}
\label{repU1}
\rho(u) \ket{lmn} = u^{n} \ket{lmn} \qquad \textup{for} \quad u \in \U(1) ,
\end{equation}
making the GNS representation $\U(1)$-equivariant:
$$
\rho(u) \pi(f)  = \pi (\alpha_u(f)).
$$
Moreover, $\U(1)$-invariance of the Haar state implies that the restriction of the GNS map $C(\CP_q^1) \to L^2(\SU)^{\U(1)}$ is injective; we will accordingly denote this Hilbert space by $L^2(\CP_q^1) := L^2(\SU)^{\U(1)}$. More explicitly, we derive from \eqref{repU1} that 
$$
L^2(\CP_q^1) = \textup{Span} \{ \ket{lm0} : l \in \N, m = -l, -l+1, \cdots , l \} ^{\text{clos}}.
$$

\subsection{The line bundles over $\CP_q^1$}\label{se:avb}
~\\
The right action of the group $\U(1)$ on the algebra 
$\ASU$ allows one to give a
vector space decomposition $\ASU=\oplus_{n\in\IZ} \cl_n$, where, 
\beq\label{libu} 
\cl_n := \{x \in \ASU ~:~ \alpha_u (x) = u^{n/2} x, \quad \forall \; u \in \U(1) \} .
\eeq 
Equivalently \cite[Eq.~(1.10)]{maetal}, these could  be defined via the action of $K \in \su$. Indeed, 
if $H$ denotes the infinitesimal generator of the action $\alpha$, the group-like element $K$ can be written as $K=q^{-H}$. 
In particular $\cl_0 = \Apq$.  Also, $\cl_n^* = \cl_{-n}$ and $\cl_n\cl_m = \cl_{n+m}$. 
Each $\cl_n$ is clearly a bimodule over $\Apq$.
It was shown in \cite[Prop.~6.4]{SWPod} that
each $\cl_n$ is isomorphic to a projective left $\Apq$-module of rank 1 and winding number $-n$.
We have indeed the following proposition.

\begin{prop}\label{masu}
%\begin{enumerate}
%\item Each $\cl_n$ is the bimodule of equivariant elements associated with the irreducible weight $-n$ representation of 
%$\U(1)$.  
%\item 
The natural map $\cl_n \otimes \cl_m \to \cl_{n+m}$ defined by multiplication induces an isomorphism of $\Apq$-bimodules
$$
\cl_n \otimes_{\Apq} \cl_m \simeq \cl_{n+m} .
$$
%\end{enumerate}
\end{prop}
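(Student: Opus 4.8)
The plan is to show that the multiplication map itself,
$$
\mu: \cl_n \otimes_{\Apq} \cl_m \to \cl_{n+m}, \qquad x \otimes y \mapsto xy,
$$
is the desired isomorphism. It is manifestly a morphism of $\Apq$-bimodules, and because the decomposition $\ASU = \oplus_{n} \cl_n$ is strongly graded in the sense that $\cl_n \cl_m = \cl_{n+m}$, it is surjective. The whole content of the proposition is therefore injectivity, which I would establish by exhibiting an explicit two-sided inverse built from the strong-grading condition in a single degree.

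Concretely, since $\cl_{-m}\cl_m = \cl_0 = \Apq$ contains the unit, I would fix a finite ``partition of unity'' $1 = \sum_j s_j t_j$ with $s_j \in \cl_{-m}$ and $t_j \in \cl_m$, and define
$$
\nu: \cl_{n+m} \to \cl_n \otimes_{\Apq} \cl_m, \qquad \nu(z) = \sum\nolimits_j (z s_j) \otimes t_j .
$$
This is well defined because $z s_j \in \cl_{n+m}\cl_{-m} \subseteq \cl_n$. One checks $\mu \circ \nu = \id$ at once, since $\mu(\nu(z)) = \sum_j z s_j t_j = z$. For $\nu \circ \mu = \id$ take a simple tensor $x \otimes y$ with $x \in \cl_n$, $y \in \cl_m$; then $y s_j \in \cl_m\cl_{-m} \subseteq \Apq$, so it may be moved across the balanced tensor product to give
$$
\nu(xy) = \sum\nolimits_j (x y s_j) \otimes t_j = \sum\nolimits_j x \otimes (y s_j) t_j = x \otimes y\Big(\sum\nolimits_j s_j t_j\Big) = x \otimes y .
$$
By linearity this yields $\nu \circ \mu = \id$, so $\mu$ is bijective and hence an isomorphism of bimodules.

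This is the standard argument that in a strongly $\IZ$-graded algebra each homogeneous component is an invertible bimodule over the degree-zero part and multiplication induces isomorphisms on tensor products; the bimodule-map property of $\mu$ needs no further comment. The one genuinely nontrivial input is the strong-grading identity, and in particular the fact that the unit lies in $\cl_{-m}\cl_m$, which I expect to be the main point to secure. It must be extracted from the $\SU$ relations: the identities $a^*a + c^*c = 1$ and $aa^* + q^2 cc^* = 1$ already exhibit $1$ as a sum of products of elements of opposite winding number, and the general statement $\cl_{-m}\cl_m = \Apq$ then follows by iterating together with $\cl_n^* = \cl_{-n}$. Since this identity is recorded just before the proposition, the proof reduces to the inverse construction above.
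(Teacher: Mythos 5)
Your proof is correct, but it is not the route the paper takes. The paper's own argument is a one-line sketch: it invokes the representation theory of $\U(1)$ together with relations that hold already in the tensor product, such as $a \otimes_{\Apq} c - q\, c \otimes_{\Apq} a = 0$ and $c \otimes_{\Apq} c^* - c^* \otimes_{\Apq} c = 0$; the idea is to rewrite any element of $\cl_n \otimes_{\Apq} \cl_m$ in a normal form built from the explicit generators \eqref{qpro} and match it against the corresponding normal form in $\cl_{n+m}$. You instead run the general strongly-graded-algebra argument: from $\cl_{-m}\cl_m = \cl_0 = \Apq$ you extract a finite decomposition $1 = \sum_j s_j t_j$ and use it to write an explicit two-sided inverse $\nu(z) = \sum_j (z s_j)\otimes t_j$ of the multiplication map. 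Your verification of $\mu\circ\nu = \id$ and $\nu\circ\mu = \id$ is sound (the key step, moving $y s_j \in \Apq$ across the balanced tensor product, is exactly right), and you correctly identify the one nontrivial input, namely $1 \in \cl_{-m}\cl_m$, which does follow from the unitarity relations $a^*a + c^*c = 1 = aa^* + q^2 cc^*$ by expanding $(a^*a + c^*c)^{m}$ and using $\cl_n^* = \cl_{-n}$. What your approach buys is generality and rigor: it shows each $\cl_n$ is an invertible $\Apq$-bimodule purely from the strong grading, with no computation in normal forms, and it makes the injectivity argument (which the paper leaves implicit) completely explicit. What the paper's approach buys is contact with the concrete generators and the $q$-commutation relations, which it reuses later (e.g., in identifying the homogeneous coordinate ring with the quantum plane).
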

\begin{proof}
This follows from the representation theory of $\U(1)$ and the relations 
$$
a \otimes_{\Apq} c - q c \otimes_{\Apq} a = 0 , \quad 
a \otimes_{\Apq} c^* - q c^* \otimes_{\Apq} a = 0 , \quad 
c \otimes_{\Apq} c^*- c^*\otimes_{\Apq} c = 0, 
$$ 
and so on, that can be easily established. 
\end{proof}
Using the explicit action of $K$ in the first line of \eqref{lact}, a generating set  for the $\cl_n$'s as $\Apq$-modules is found to be given by elements
\begin{align}\label{qpro}
\{ c^{* \mu}a^{* n-\mu} \,, \, \mu = 0, \dots,  n \} \qquad  
\mathrm{for }\quad n\geq 0 , \nn \\ 
\{ a^{\mu} \, c^{\mn-\mu} \,, \, \mu = 0 \dots,  \mn \} \qquad  
\mathrm{for }\quad n\leq0 , 
 \end{align}
 and any $\phi\in \cl_n$ is written as
$$
\phi_f = \left\{
\begin{array}{ll} 
\displaystyle \, \sum\nolimits_{\mu=0}^{n}\, f_{\mu}~c^{*}\,^{\mu}\, a^{*}\,^{n-\mu}
 = \sum\nolimits_{\mu=0}^{n}\, c^{*}\,^{\mu}\, a^{*}\,^{n-\mu} \, \tilde f_{\mu} 
& \qquad \mathrm{for} \quad n \geq 0 \ , \\[10pt]
\displaystyle \, \sum\nolimits_{\mu=0}^{\mn}\, f_{\mu}\, a^{\mu} \, c^{\mn-\mu} 
 =  \sum\nolimits_{\mu=0}^{\mn}\, a^{\mu}\, c^{\mn-\mu}\, \tilde f_{\mu} 
& \qquad \mathrm{for} \quad n \leq 0 \ , 
\end{array} \right.
$$
with $f_\mu$ and $\tilde f_\mu$ generic elements in $\Apq$. 
It is worth stressing that the elements in \eqref{qpro} are not independent over $\Apq$ since the $\cl_n$ are not free modules.\\
{}From the definition of the $\cl_n$'s in \eqref{libu} and the relations \eqref{relsu} of $\su$ one gets that,
\beq\label{rellb}
E \lt \cl_n \subset \cl_{n+2}, \qquad F \lt \cl_n \subset \cl_{n-2} .
\eeq
On the other hand, commutativity of the left and right actions of $\su$ yields that 
$$
\cl_n \rt  g \subset \cl_n, \qquad \forall \, g\in \su .
$$

A PBW-basis for $\ASU$ is given by monomials $\{a^m c^k c^{*l} \}$ for $k,l =0,1, \dots$, and $m\in\IZ$ with the convention that $a^{-m}$ is a short-hand notation for $a^{*m}$ ($m>0$). Also, a similar basis for $\cl_n$ is given by the monomials $a^{l-k} c^k c^{*l+n}$; indeed,  from \eqref{lact} it follows that
$K \lt (a^m c^k c^{*l}) = q^{(-m-k+l)/2} a^m c^k c^{*l}$; then the requirement that $-m-k+l = n$ is met by renaming $l \to l+n$ forcing in turn $m=l-k$. 
In particular, the monomials $a^{l-k} c^k c^{*l}$ are the only $K$-invariant elements 
thus providing a PBW-basis for $\cl_0=\Apq$.

At the $C^*$-algebraic level we can use the $\U(1)$-action to decompose the $C(\SU)$ in $C(\CP_q^1)$-modules. Thus, we consider the $C(\CP_q^1)$-modules 
$$
\Gamma(\L_n) := \{ f \in C(\SU) : \alpha_u (f) = u^{n/2} x , \quad \forall u \in \U(1) \},
$$
as spaces of continuous sections on the line bundles $\L_n$. Accordingly, the space of $L^2$-sections are defined as
\begin{align*}
L^2(\L_n) &:= \{ \psi \in L^2(\SU) : \rho(u) \psi = u^{n/2} \psi , \quad \forall u \in \U(1) \}\\
& \:= \textup{Span} \{ \ket{l,m,n/2}:  l= |n|, |n|+1, \cdots ; m = -l, \cdots, l \}^{clos},
\end{align*}
the second line following at once from \eqref{repU1}.

\subsection{The calculi on the principal bundle}\label{se:cotqpb}
~\\
The principal $\U(1)$-bundle $\Apq \hookrightarrow \ASU$ is endowed \cite{BM93,BM97} with compatible nonuniversal calculi obtained from the 3-dimensional left-covariant calculus \cite{wor87} on $\SU$
we describe first. We then give the unique left covariant 2-dimensional calculus \cite{Po89} on the projective line $\pq$ obtained by restriction. 

The differential calculus we take on the quantum group $\SU$ 
 is the three dimensional left-covariant one already developed in \cite{wor87}. 
Its quantum tangent space $\mathcal{X}(\SU)$  is generated by the
three elements: 
\begin{equation}\label{qts}
X_{z} =\frac{1-K^{4}}{1-q^{-2}} = (X_{z})^*, \qquad X_{-}
=q^{-1/2}FK , \qquad X_{+} =q^{1/2}EK = (X_{-})^*, 
\end{equation}
whose coproducts are easily found:
$$
\cop X_z = 1\otimes X_z + X_z \otimes K^4, \qquad \cop X_\pm = 1\otimes X_\pm + X_\pm \otimes K^2 .
$$
From these one also infers that
$$
S(X_z) = - X_z K^{-4}, \qquad S(X_\pm) = - X_\pm K^{-2}.
$$
The dual space of 1-forms $\Omega^1(\SU)$ has a basis \beq
\omega_{z} =a^*\dd a+c^*\dd c , \qquad \omega_{-} =c^*\dd
a^*-qa^*\dd c^*, \qquad \omega_{+} =a\dd c-qc \dd a , \label{q3dom}
\eeq of left-invariant forms, that is
\begin{equation}\label{invfor}
\Delta_{L}^{(1)}(\omega_{s})=1\otimes\omega_{s}, \qquad s=z, \pm ,
\end{equation}
with $\Delta_{L}^{(1)}$ the (left)
coaction of $\ASU$ on itself extended to forms. The differential $\dd
: \ASU \to \Omega^1(\SU)$ is then given by 
\beq\label{exts3} 
\dd f =
(X_{+}\lt f) \,\omega_{+} + (X_{-}\lt f)
\,\omega_{-} + (X_{z}\lt f) \,\omega_{z}, 
\eeq 
for any $f\in\ASU$. By taking the conjugate of \eqref{q3dom} the requirement of a differential $*$-calculus, 
$\dd f ^* = (\dd f)^*$ (see \S\ref{se:nccs}), yields $\omega_{-}^*=-\omega_{+}$ and
$\omega_{z}^*=-\omega_{z}$. 
The bimodule structure is:
\begin{align}\label{bi1}
\omega_{z}a=q^{-2}a\omega_{z}, \qquad
\omega_{z}a^*=q^{2}a^*\omega_{z}, \qquad 
\omega_{\pm}a=q^{-1}a\omega_{\pm},\qquad  \omega_{\pm}a^*=qa^*\omega_{\pm} \nn \\
\omega_{z}c=q^{-2}c\omega_{z}, \qquad
\omega_{z}c^*=q^{2}c^*\omega_{z}, \qquad 
\omega_{\pm}c=q^{-1}c\omega_{\pm} , \qquad
\omega_{\pm}c^*=qc^*\omega_{\pm},
\end{align}
Higher dimensional forms can be defined
in a natural way by requiring compatibility for commutation
relations and that $\dd^2=0$. One has:
\beq \label{dformc3} 
\dd
\omega_{z} =-\omega_{-}\wedge\omega_{+} , \quad \dd \omega_{+}
=q^{2}(1+q^{2})\, \omega_{z}\wedge\omega_{+} , \quad \dd \omega_{-}
=-(1+q^{-2})\, \omega_{z}\wedge\omega_{-}, 
\eeq 
together with commutation relations:
\begin{align}\label{crc3}
\omega_{\pm}\wedge\omega_{\pm}=\omega_{z}
\wedge\omega_{z}=0, \quad \omega_{-}\wedge\omega_{+}+q^{-2}\omega_{+}\wedge\omega_{-}=0, \quad
\omega_{z}\wedge\omega_{\mp}+q^{\pm4}\omega_{\mp}\wedge\omega_{z}=0.
\end{align}
Finally, there is a unique top form $\omega_{-}\wedge\omega_{+}\wedge\omega_{z}$.

We summarize the above results in the following
proposition.
\begin{prop}\label{3dsu}
For the 3-dimensional left covariant differential calculus on $\SU$ the bimodules of forms are all (left, say) trivial 
$\ASU$-modules given explicitly as follows:
\begin{align*}
\Omega^0(\SU)&=\ASU , \\
\Omega^1(\SU)&=\ASU \langle \omega_-, \omega_+, \omega_z \rangle, \\
\Omega^2(\SU)&=\ASU \langle \omega_-\wedge\omega_+, \omega_-\wedge\omega_z, 
\omega_+\wedge \omega_z \rangle, \\
\Omega^3(\SU)&=\ASU \, \omega_{-}\wedge\omega_{+}\wedge\omega_{z} ; 
\end{align*}
the exterior differential and commutation relations are obtained from \eqref{dformc3} and \eqref{crc3}, whereas the bimodule structure is obtained from \eqref{bi1}.
\end{prop}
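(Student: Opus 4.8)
The plan is to read the proposition as a structural statement about a left-covariant calculus, so that most of the work is done by the general theory of such calculi on Hopf algebras, with the explicit relations \eqref{bi1}, \eqref{dformc3} and \eqref{crc3} (already recorded above) supplying the bimodule structure, the differentials and the wedge relations. Concretely, the only genuine content to establish is that each $\Omega^p(\SU)$ is a \emph{free} left $\ASU$-module with the stated basis.

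First I would dispose of the first-order part. By \eqref{invfor} the calculus is left-covariant, and for any left-covariant first-order calculus on a Hopf algebra there is a left-module isomorphism $\Omega^1(\SU)\cong\ASU\otimes{}_{\mathrm{inv}}\Omega^1$ onto the tensor product with the space of left-invariant one-forms \cite{wor87}; in particular $\Omega^1(\SU)$ is free, of rank equal to $\dim{}_{\mathrm{inv}}\Omega^1$. Since the quantum tangent space $\mathcal{X}(\SU)$ is spanned by the three elements $X_z,X_\pm$ of \eqref{qts}, its dual ${}_{\mathrm{inv}}\Omega^1$ is three-dimensional with basis $\omega_z,\omega_\pm$, giving $\Omega^1(\SU)=\ASU\langle\omega_-,\omega_+,\omega_z\rangle$. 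The right-module structure is then fixed by \eqref{bi1}, which rewrites right multiplication by a generator as a power of $q$ times left multiplication; this shows the two module structures are compatible and that $\{\omega_-,\omega_+,\omega_z\}$ is simultaneously a free generating set for both.

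Next I would handle the higher forms. By left-covariance it again suffices to compute the algebra $\Lambda^\bullet={}_{\mathrm{inv}}\Omega^\bullet(\SU)$ of invariant forms, since $\Omega^p(\SU)\cong\ASU\otimes\Lambda^p$ is then automatically free. This $\Lambda^\bullet$ is the quotient of the tensor algebra on $\omega_-,\omega_+,\omega_z$ by the quadratic relations \eqref{crc3}: the three squares vanish and each of the three mixed products may be reordered at the cost of a power of $q$. Hence every monomial has a normal form $\omega_-^{a}\omega_+^{b}\omega_z^{c}$ with $a,b,c\in\{0,1\}$, yielding $\dim\Lambda^0=1$, $\dim\Lambda^1=3$, $\dim\Lambda^2=3$ (basis $\omega_-\wedge\omega_+,\,\omega_-\wedge\omega_z,\,\omega_+\wedge\omega_z$), $\dim\Lambda^3=1$ (basis $\omega_-\wedge\omega_+\wedge\omega_z$) and $\Lambda^p=0$ for $p\geq4$ --- the Poincar\'e series $(1,3,3,1)$ of a classical three-manifold. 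Tensoring with $\ASU$ produces the four free modules of the proposition, and the differentials of the generators are exactly \eqref{dformc3}.

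The step needing care is ensuring the counts $1,3,3,1$ are exact rather than mere upper bounds, i.e.\ that the rewriting induced by \eqref{crc3} is confluent. All overlaps involving a square reduce to $0$ by both routes, so the only substantive ambiguity is the word $\omega_z\omega_+\omega_-$: reducing $\omega_z\omega_+$ first or $\omega_+\omega_-$ first must yield the same scalar multiple of $\omega_-\omega_+\omega_z$. A direct check using the exponents $-q^2,-q^{4},-q^{-4}$ from \eqref{crc3} shows both paths give $-q^{2}\,\omega_-\wedge\omega_+\wedge\omega_z$, so the reordering is consistent and the normal-form monomials are genuinely independent. This confluence is not accidental: the $q$-exponents in \eqref{crc3} are precisely those forced by imposing $\dd^2=0$ together with \eqref{bi1}, which is also how one recovers the identities \eqref{dformc3}. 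With independence secured the proposition follows.
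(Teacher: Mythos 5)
Your argument is correct, but it is worth noting that the paper itself offers no proof of this proposition at all: it is introduced with ``we summarize the above results,'' i.e.\ it merely collects the formulas \eqref{q3dom}, \eqref{bi1}, \eqref{dformc3}, \eqref{crc3} of Woronowicz's $3$D left-covariant calculus and asserts the free-module structure, implicitly delegating everything to \cite{wor87}. What you supply is the verification the paper skips, and you identify the right two pressure points. First, freeness of $\Omega^1$ via the left-covariance isomorphism $\Omega^1(\SU)\cong\ASU\otimes{}_{\mathrm{inv}}\Omega^1$ is exactly the standard Woronowicz argument, and \eqref{bi1} then makes $\{\omega_-,\omega_+,\omega_z\}$ a simultaneous left and right generating set, which is what ``trivial bimodule'' means here. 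Second, and more substantively, you check that the ranks $1,3,3,1$ are exact and not just upper bounds by verifying confluence of the rewriting system defined by \eqref{crc3}; your computation of the one nontrivial overlap $\omega_z\omega_+\omega_-$ is correct (with $\omega_+\omega_-\to -q^2\omega_-\omega_+$, $\omega_z\omega_-\to -q^4\omega_-\omega_z$, $\omega_z\omega_+\to -q^{-4}\omega_+\omega_z$, both reduction paths give $-q^{2}\,\omega_-\wedge\omega_+\wedge\omega_z$), and the square overlaps all collapse to zero. The only caveat is that this diamond-lemma argument establishes the result for the exterior algebra \emph{presented} by the relations \eqref{crc3} --- which is how the paper defines higher forms (``requiring compatibility ... and $\dd^2=0$'') --- whereas in Woronowicz's intrinsic construction the higher-degree relations are derived from the braiding and the dimensions come from the antisymmetrizer; if one wanted the proposition for that intrinsic calculus one would still have to match the two presentations. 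Within the paper's own conventions your proof is complete and is more explicit than anything the authors provide.
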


The restriction of the above 3 dimensional calculus to the projective line $\pq$ yields
the unique left covariant 2-dimensional calculus on the latter \cite{maj05}. 
This unique calculus was realized in \cite{SW04} via a Dirac operator.

The calculus on $\pq$ is broken into a holomorphic and anti-holomorphic part in a natural way. 
The module of 1-forms $\Omega^1(\pq)$ is shown to be isomorphic
to the direct sum $\cl_{-2}\oplus\cl_2$, that is the line bundles
with winding number $\pm 2$. Since the element $K$ acts as the
identity on $\Apq$, the differential \eqref{exts3} becomes, when restricted to the latter, 
\begin{align*}
\dd f = (X_{-}\lt f) \,\omega_{-} + (X_{+}\lt f) \,\omega_{+}    
\qquad \mathrm{for} \quad f\in\Apq .
\end{align*}
These lead one to break the exterior differential into a holomorphic and an
anti-holomorphic part, $\dd = \dolb + \dol$, with:
\begin{align}\label{dedeb}
\dolb f=\left(X_{-}\lt f\right)\omega_{-} , \qquad  \dol f=\left(X_{+}\lt f\right)\omega_{+} ,  \qquad \mathrm{for} \quad f\in\Apq .
\end{align}
\begin{lma}
The two differentials $\del$ and $\delbar$ satisfy
$$
(\del f)^* = \delbar f^*.
$$
\end{lma}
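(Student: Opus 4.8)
The plan is to compute $(\dol f)^*$ directly from the definition \eqref{dedeb} and to match it against $\dolb f^*$. Starting from $\dol f = (X_+\lt f)\,\omega_+$, I would apply the involution and use $\omega_+^* = -\omega_-$ (which follows from $\omega_-^* = -\omega_+$) together with the compatibility of $*$ with the left action. The stated identity $h\lt x^* = ((Sh)^*\lt x)^*$ rearranges, after replacing $x$ by $x^*$ and taking adjoints, into the more convenient form $(h\lt x)^* = (Sh)^*\lt x^*$; with $h = X_+$ this gives $(\dol f)^* = -\omega_-\,\big((SX_+)^*\lt f^*\big)$.

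Next I would evaluate $(SX_+)^*$ using the Hopf $*$-algebra structure of $\su$. Writing $X_+ = q^{1/2}EK$ and using $S(E) = -qE$, $S(K) = K^{-1}$ together with $E^* = F$, $K^* = K$, one finds $S(X_+) = -q^{3/2}K^{-1}E$ and hence $(SX_+)^* = -q^{3/2}FK^{-1}$. Since every $f\in\Apq$ is $K$-invariant, $K^{\pm}\lt f^* = f^*$, so acting on $f^*$ collapses the $K^{-1}$ and reduces the whole expression to a multiple of $X_-\lt f^* = q^{-1/2}F\lt f^*$; explicitly, $F\lt f^* = q^{1/2}(X_-\lt f^*)$ yields $(SX_+)^*\lt f^* = -q^2\,(X_-\lt f^*)$. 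Therefore $(\dol f)^* = q^2\,\omega_-\,(X_-\lt f^*)$.

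It remains to move $\omega_-$ back to the right of the function $X_-\lt f^*$, for which I would invoke the bimodule relations \eqref{bi1}. The element $X_-\lt f^*$ is homogeneous for the $\U(1)$-grading (it is the image under a fixed action of a homogeneous element), and on such an element commuting $\omega_-$ through produces exactly a factor $q^{-2}$, cancelling the $q^2$ above and leaving $(\dol f)^* = (X_-\lt f^*)\,\omega_- = \dolb f^*$. The one point needing care — and the main obstacle — is precisely this bookkeeping of powers of $q$: one must verify that the factor $q^2$ relating $(SX_+)^*$ to $X_-$ is exactly cancelled by the factor $q^{-2}$ coming from the bimodule commutation, which in turn relies on identifying the correct $\U(1)$-degree of $X_-\lt f^*$. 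A cleaner, power-free alternative is to note that the two-dimensional calculus on $\pq$ is the restriction of the $*$-calculus on $\SU$, so that $\dd f^* = (\dd f)^*$ on $\Apq$; decomposing $\dd = \dol + \dolb$ and using that the involution interchanges the holomorphic and anti-holomorphic parts $\Omega^{(1,0)}(\pq)$ and $\Omega^{(0,1)}(\pq)$ — which is immediate from $\omega_+^* = -\omega_-$ — one matches the two summands of $\dd f^* = (\dd f)^*$ and reads off $\dolb f^* = (\dol f)^*$ with no explicit computation.
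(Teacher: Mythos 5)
Your computation is correct and follows essentially the same route as the paper's proof: apply the involution, use $(h\lt x)^* = (Sh)^*\lt x^*$ with $h = X_+$, and cancel the resulting factor $q^{2}$ (which the paper obtains as $K^{-2}$ acting on $\cl_{-2}$ via $S(X_+) = -X_+K^{-2}$, and you obtain from the antipode on generators together with $K$-invariance of $f^*$) against the $q^{-2}$ from commuting $\omega_-$ past the element $X_-\lt f^*\in\cl_{-2}$. The power-of-$q$ bookkeeping you flag as the delicate point does check out, and your closing alternative --- reading the identity off from $\dd f^*=(\dd f)^*$ once one knows that $*$ interchanges $\Omega^{(1,0)}(\pq)$ and $\Omega^{(0,1)}(\pq)$, which follows from $\omega_+^*=-\omega_-$ alone --- is also valid.
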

\begin{proof}
This follows by direct computation:
$$
(\del f)^* = - \omega_- (X_+ \lt f)^* = - \omega_- (S(X_+)^* \lt f^*= \omega_- (K^{-2} X_- \lt f^*) = 
(X_- \lt f^*) \omega_- = \delbar f^* ,
$$
using the compatibility of the $*$-structure with the left action.
\end{proof}
\noindent 
The decomposition of the calculus shows that 
$$
\Omega^1({\pq})=\Omega^{(0,1)}(\pq) \oplus\Omega^{(1,0)}(\pq)
$$ where $\Omega^{(0,1)}(\pq)\simeq \cl_{-2} \omega_- $ is the
$\Apq$-bimodule generated by: 
\begin{equation}\label{antiholo}
\{\dolb\bm,\dolb\bz,\dolb\bp\}=\{a^{2},ca,c^{2}\}\,\omega_{-} =
q^{2}\omega_{-}\{a^{2},ca,c^{2}\} ,
\end{equation}
and $\Omega^{(1,0)}(\pq)\simeq
\cl_{+2} \omega_+ $ is the $\Apq$-bimodule generated by:
\begin{equation}\label{holo}
\{\dol\bp,\dol\bz,\dol\bm\}=\{a^{*2},c^{*}a^{*},c^{*2}\}\,
\omega_{+}= q^{-2} \omega_{+} \{a^{*2},c^{*}a^{*},c^{*2}\} .
\end{equation}
That these two modules of forms are not free is also expressed by
 the existence of relations among the differential:
$$
 \dol\bz - q^{-2} \bm\dol\bp + q^{2} \bp\dol\bm = 0, \qquad 
\dolb\bz - \bp\dolb\bm + q^{-4} \bm\dolb\bp =0.
$$
The 2-dimensional calculus on $\pq$ has a unique top 2-form $\omega$ with
$\omega f = f \omega$,  for all $f\in\Apq$, and $\Omega^2({\pq})$ is
the free $\Apq$-module generated by $\omega$, that is
$$
\Omega^2({\pq})=\omega \Apq = \Apq \omega.
$$
Now, both
$\omega_{\pm}$ commutes with elements of $\Apq$ and so does
$\omega_{-}\wedge\omega_{+}$, which is taken as the natural
generator $\omega=\omega_{-}\wedge\omega_{+}$ of $\Omega^2({\pq})$. The exterior derivative
of any 1-form  $\alpha = x \omega_{-} + y \omega_{+} \in \cl_{-2}
\omega_{-} \oplus \cl_{+2} \omega_{+}$ is  given by 
\begin{align}\label{d1f}
\dd \alpha   = \dd (x \omega_{-} + y \omega_{+}) = \dol x \wedge \omega_{-} + \dolb y \wedge \omega_{+}  = 
(X_- \lt y - q^{2} X_+ \lt x ) \, \omega_{-}\wedge\omega_{+} .
\end{align}
We summarize the above results in the following
proposition.
\begin{prop}\label{2dsph}
The 2-dimensional differential calculus on the projective line $\pq$ is:
$$
\Omega^{\bullet}({\pq}) = \Apq \oplus \left(\cl_{-2} \omega_{-}
\oplus \cl_{+2} \omega_{+} \right) \oplus \Apq \, \omega_{-}\wedge\omega_{+} ,
$$
Moreover, the splitting $\Omega^1(\pq) = \Omega^{(1,0)}(\pq) \oplus\Omega^{(0,1)}(\pq)$ together with the two maps $\del$ and $\delbar$ given in \eqref{dedeb} constitute a complex structure (in the sense of Definition \ref{defn:compl-str}) for the differential calculus. 
\end{prop}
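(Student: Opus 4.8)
The plan is to prove the two assertions in turn, the first being essentially a bookkeeping summary and the second the substantive claim. For the structure of $\Omega^\bullet(\pq)$ I would simply assemble what the restriction of Proposition~\ref{3dsu} to $\Apq$ already yields: since $K$ acts as the identity on $\Apq = \cl_0$, one has $X_z\lt f = 0$ for $f\in\Apq$, so $\dd f = (X_+\lt f)\,\omega_+ + (X_-\lt f)\,\omega_-$ has no $\omega_z$-component; together with the identifications $\Omega^1(\pq)\cong\cl_{-2}\omega_-\oplus\cl_{+2}\omega_+$ from \eqref{antiholo} and \eqref{holo} and $\Omega^2(\pq) = \Apq\,\omega$ with $\omega = \omega_-\wedge\omega_+$ central, this gives the displayed decomposition of $\Omega^\bullet(\pq)$.

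For the complex-structure claim I would first fix the bigrading
$$
\Omega^{(0,0)}(\pq) = \Apq,\quad \Omega^{(1,0)}(\pq) = \cl_{+2}\omega_+,\quad \Omega^{(0,1)}(\pq) = \cl_{-2}\omega_-,\quad \Omega^{(1,1)}(\pq) = \Apq\,\omega,
$$
all remaining $\Omega^{(p,q)}(\pq)$ being zero. The condition $\Omega^n = \bigoplus_{p+q=n}\Omega^{(p,q)}$ is then immediate from the first assertion. That $\Omega^{(\bullet,\bullet)}(\pq)$ is a bigraded $*$-subalgebra I would check on the generators $\omega_\pm$: the relations \eqref{crc3} give $\omega_\pm\wedge\omega_\pm = 0$ and $\omega_+\wedge\omega_- = -q^2\,\omega_-\wedge\omega_+$, so the wedge product respects the bidegree; while $\omega_-^* = -\omega_+$, $\omega_z^* = -\omega_z$ together with $\cl_n^* = \cl_{-n}$ show that $*$ interchanges $\Omega^{(1,0)}$ and $\Omega^{(0,1)}$ and preserves $\Omega^{(1,1)}$, i.e. $*$ carries $\Omega^{(p,q)}$ to $\Omega^{(q,p)}$.

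The heart of the matter is that $\del$ and $\delbar$ are pure of the stated bidegrees and sum to $\dd$. On functions this is \eqref{dedeb}: by \eqref{rellb} one has $E\lt\cl_0\subset\cl_2$ and $F\lt\cl_0\subset\cl_{-2}$, so $\del f = (X_+\lt f)\,\omega_+\in\cl_{+2}\omega_+ = \Omega^{(1,0)}$ and $\delbar f = (X_-\lt f)\,\omega_-\in\cl_{-2}\omega_- = \Omega^{(0,1)}$. I would then extend $\del,\delbar$ to one-forms as the bidegree-raising components of $\dd$; because $\Omega^{(2,0)} = \Omega^{(0,2)} = 0$, the exterior derivative of a $(1,0)$-form can only be $\delbar$ of it and that of a $(0,1)$-form only $\del$ of it, and the explicit formula \eqref{d1f} confirms both land in $\Apq\,\omega = \Omega^{(1,1)}$; hence $\dd = \del+\delbar$ throughout. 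The remaining algebraic properties are then automatic: $\del^2 = \delbar^2 = 0$ and $\del\delbar+\delbar\del = 0$ follow by splitting $\dd^2 = 0$ into bidegrees, the Leibniz rules for $\del,\delbar$ follow from that for $\dd$ since the product is bigraded, and the compatibility $\del(a)^* = \delbar(a^*)$ is precisely the Lemma above.

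The only step requiring genuine care is this bidegree bookkeeping for $\del$ and $\delbar$, which rests entirely on the line-bundle shifts \eqref{rellb} and the commutation relations \eqref{crc3}; once these are invoked there is no real obstacle, and collecting the verified items against Definition~\ref{defn:compl-str} completes the proof.
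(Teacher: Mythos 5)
Your proposal is correct and follows essentially the same route as the paper, which presents Proposition \ref{2dsph} as a summary of the preceding discussion rather than giving a separate proof: the decomposition of $\Omega^1(\pq)$ into $\cl_{\mp 2}\omega_\mp$ via \eqref{antiholo}--\eqref{holo}, the bidegree bookkeeping through \eqref{rellb} and \eqref{crc3}, the formula \eqref{d1f} for $\dd$ on one-forms, and the unnumbered Lemma for $\del(f)^*=\delbar(f^*)$ are exactly the ingredients the paper assembles. Your writeup merely organizes these into an explicit check against Definition \ref{defn:compl-str}, which is a fine way to present it.
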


\subsection{The holomorphic connection}\label{se:con}
~\\
The next ingredient is a connection on the quantum principal bundle with respect to the left covariant calculus $\Omega(\pq)$. The connection will in turn determine a covariant derivative 
on any $\Apq$-module $\ce$, in particular on the modules of sections of line bundles $\cl_n$ given in \eqref{libu}. 

The most natural way to define a connection on a quantum principal
bundle (with given calculi) is by splitting the 1-forms on the total
space into horizontal and vertical ones \cite{BM93,BM97}. Since
horizontal 1-forms 
are given
in the structure of the principal bundle, one needs a projection on
forms whose range is the subspace of vertical ones. The projection
is required to be covariant with respect to the right coaction of
the structure Hopf algebra.

For the principal bundle over the quantum projective line $\pq$ that we are
considering, a principal connection is a covariant left module 
projection $\Pi : \Omega^1(\SU) \to \Omega^1_{\mathrm{ver}}(\SU)$.  That is
$\Pi^2=\Pi$ and $\Pi(x \omega) = x \Pi(\omega)$, for
$\omega\in\Omega^1(\SU)$ and $x \in\mathcal{A}(\SU)$,  and $\alpha \circ \Pi = \Pi \circ \alpha$, with a natural extension of the $\U(1)$-action $\alpha$ to 1-forms. Equivalently
it is a covariant splitting $\Omega^1(\SU)=\Omega^1_{\mathrm{ver}}(\SU)
\oplus\Omega^1_{\mathrm{hor}}(\SU)$. It
is not difficult to realize that with the left covariant 3 dimensional calculus on $\ASU$, a
basis for $\Omega^1_{\mathrm{hor}}(\SU)$ is given by $\{\omega_{-}, \omega_{+}\}$. Furthermore:
$$
\alpha_u (\omega_{z})=\omega_{z} , \qquad
\alpha_u (\omega_{-})=\omega_{-}\, u^{*2}, \qquad
\alpha_u (\omega_{+})=\omega_{+}\,  u^{2}  \qquad \textup{for} \quad u\in\U(1) .
$$
Thus, a natural choice of a connection \cite{BM93,maj05} is to define $\omega_{z}$ to be
vertical: 
$$
\Pi (\omega_{z}):=\omega_{z}, \qquad
\Pi \left(\omega_{\pm}\right):=0. 
$$
With a connection, one has
a covariant derivative for any $\Apq$-module $\ce$: 
$$
\nabla:\ce \to \Omega^1(\pq) \otimes_{\Apq} \ce, 
\qquad \nabla := (\id - \Pi_{z}) \circ \dd , 
$$
and one readily shows the Leibniz rule
property: $\nabla(f \phi)=f \nabla(\phi)+(\dd f) \otimes \phi$, for all
$\phi\in\ce$ and $f\in\Apq$. We shall concentrate on $\ce$ being the line
bundles $\cl_n$ given in \eqref{libu}. Then, with the left covariant 2-dimensional
calculus on $\Apq$ (coming from the left covariant 3-dimensional calculus on
$\ASU$ as explained before) we have
\begin{align}\label{coder2d}
\nabla \phi  = \left(X_{+}\lt\phi\right)\omega_{+}
+\left(X_{-}\lt\phi\right)\omega_{-} 
 = q^{-n-2} \omega_{+} \left(X_{+}\lt\phi\right)
+ q^{-n+2} \omega_{-} \left(X_{-}\lt\phi\right) ,
\end{align}
since $X_{\pm} \lt \phi \in \cl_{n\pm2}$. 
Using Proposition~\ref{masu} we see that 
$$
\nabla\phi\in \omega_+ \cl_{n+2} \oplus \omega_- \cl_{n-2} \simeq \Omega^1(\pq) \otimes_{\Apq} \ce ,
$$
as required.

The curvature of the connection $\nabla$ is the $\Apq$-linear map 
$$
\nabla^2: \ce \to \Omega^2(\pq) \otimes_{\Apq} \ce .
$$ 
On $\phi\in\cl_n$ one finds 
$$
\nabla^2 \phi = - q^{-2n-2} \omega_+\wedge\omega_-  \left(X_z \lt \phi\right) \, , 
$$
with $X_z$ the (vertical) vector field in \eqref{qts}, or 
\begin{equation}\label{curv}
\nabla^2 =-q^{-n-1} [n] \, \omega_+\wedge\omega_- \, 
\end{equation}
as an element in $\mathrm{Hom}_{\Apq}(\cl_n, \Omega(\pq) \otimes_{\Apq} \cl_n)$. 

In \S\ref{se:hvb} we shall study at length holomorphic vector bundles on $\CP_q^1$ coming from the natural splitting of the connection $\nabla$ defined in \eqref{coder2d} into a holomorphic and anti-holomorphic part:
$\nabla = \nabla^{\dol} + \nabla^{\dolb}$, 
with 
\begin{equation}\label{coher}
\nabla^{\dol} \phi = q^{-n-2} \omega_{+} \left(X_{+}\lt\phi\right)  ,  \qquad 
\nabla^{\dolb} \phi = q^{-n+2} \omega_{-} \left(X_{-}\lt\phi\right) ,
\end{equation}
for which there are corresponding Leibniz rules: 
$\nabla^{\dol} (f \phi)=f \nabla^{\dol}(\phi)+(\dol f) \otimes \phi$ and 
$\nabla^{\dolb} (f \phi)=f \nabla^{\dolb}(\phi)+(\dolb f) \otimes \phi$, for all
$\phi\in\ce$ and $f\in\Apq$.
The operator $\nabla^{\dolb}$ clearly satisfies the conditions of a holomorphic structure as given in Definition \ref{defn:holo-str}. In particular, these connections are automatically flat as it is evident from the curvature in \eqref{curv} being a $(1,1)$-form. Thus we have the following:
\begin{defi}
The operator $\nabla^{\dolb}$ in \eqref{coher} will be called the {\it standard holomorphic structure} on $\L_n$; we denote $\nablabar_{(n)} = \nabla^{\delbar}$ or even simply $\nablabar$ when there is no room for confusion. 
 \end{defi}
 It is natural to expect that modulo gauge equivalence as given in Definition~\ref{defn:gauge}
the holomorphic structure on $\L_n$ is unique; as of now we are unable to prove the uniqueness.

\subsection{Tensor products}
~\\
We next study the tensor product of two such noncommutative line bundles with connections. 
Similar to \cite[Lemma 18]{LaSu07} we have the following
\begin{lma}
For any integer $n$ there is a `twisted flip' isomorphism 
$$
\Phi_{(n)} :  \L_n\otimes_{\Apq}  \Omega^{(0,1)}(\CP_q^1) \overset{\sim}\longrightarrow 
\Omega^{(0,1)}(\CP_q^1) \otimes_{\Apq} \L_n 
 $$
as $\Apq$-bimodules. There is a analogous map for $\Omega^{(1,0)}(\CP_q^1)$ replacing $\Omega^{(0,1)}(\CP_q^1)$.
\end{lma}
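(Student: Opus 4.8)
The plan is to construct $\Phi_{(n)}$ explicitly using the concrete description of $\L_n$ and $\Omega^{(0,1)}(\pq)$ already at hand, and then verify it is a bimodule isomorphism. Recall from \eqref{antiholo} that $\Omega^{(0,1)}(\pq) \simeq \cl_{-2}\omega_-$, so both the source and target of $\Phi_{(n)}$ can be identified, via Proposition~\ref{masu}, with $\cl_{n-2}$ up to a trailing factor of $\omega_-$. Indeed $\L_n \otimes_{\Apq} \Omega^{(0,1)}(\pq) \simeq \cl_n \otimes_{\Apq} (\cl_{-2}\omega_-) \simeq \cl_{n-2}\,\omega_-$, and likewise $\Omega^{(0,1)}(\pq)\otimes_{\Apq}\L_n \simeq (\cl_{-2}\omega_-)\otimes_{\Apq}\cl_n \simeq \omega_-\,\cl_{n-2}$. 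So at the level of modules the two sides are visibly isomorphic to the same rank-one bimodule; the content of the lemma is that the identification can be realized by a genuine bimodule map, compatibly with the left and right $\Apq$-actions.

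First I would write down the candidate map on elements. Given $\phi \in \L_n$ and a generating anti-holomorphic form, the natural guess is $\Phi_{(n)}(\phi\otimes\omega_-) := \omega_-\otimes\psi$ where $\psi\in\cl_n$ is obtained from $\phi$ by moving $\omega_-$ past it using the commutation relations. The key input here is the bimodule structure \eqref{bi1}: from $\omega_- a = q^{-1}a\omega_-$, $\omega_- a^* = q a^*\omega_-$, $\omega_- c = q^{-1}c\omega_-$, $\omega_- c^* = q c^*\omega_-$ one reads off that commuting $\omega_-$ through a monomial in $\cl_n$ produces a definite power of $q$ determined by the $\U(1)$-degree $n$ (each generator contributes $q^{\mp1}$ according to its charge). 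Concretely, for $\phi\in\cl_n$ one has $\omega_-\phi = q^{\,n}\,\phi\,\omega_-$ or the analogous scalar with the correct sign on the exponent, so the twisted flip is literally $\Phi_{(n)}(\phi\otimes\omega_-) = q^{\mp n}\,\omega_-\otimes\phi$ for a uniform scalar. This is exactly the `twist' alluded to in the name and parallels the map in \cite[Lemma 18]{LaSu07}.

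Next I would check the three required properties in turn. \emph{Well-definedness} over $\Apq$ is the first: the map must respect the middle relation $\phi b \otimes \omega_- = \phi\otimes b\omega_-$ for $b\in\Apq$, which reduces to checking that the scalar $q^{\mp n}$ is consistent when $b$ is pushed across, and this follows because elements of $\Apq=\cl_0$ commute with $\omega_-$ up to the trivial scalar (charge zero). \emph{Left and right $\Apq$-linearity} then follow from the same commutation bookkeeping together with the fact that $\omega_-$ is central up to the tabulated $q$-powers. \emph{Bijectivity} is immediate once the map is shown to be a nonzero scalar multiple of the evident flip, since $q\neq0$ makes it invertible, with inverse $q^{\pm n}$ times the reverse flip. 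Finally the analogous statement for $\Omega^{(1,0)}(\pq)\simeq\cl_{+2}\omega_+$ is proved identically, using instead the relations $\omega_+ a = q^{-1}a\omega_+$, $\omega_+ a^* = q a^*\omega_+$, etc., from the second half of \eqref{bi1}.

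I expect the main obstacle to be purely bookkeeping: getting the exponent of $q$ exactly right as a function of $n$, with the correct sign, by carefully tracking the $\U(1)$-charge of a general element of $\cl_n$ through the commutation relations \eqref{bi1}, and confirming that this single scalar is independent of which generating monomial of $\cl_n$ one tests it on (this independence is what makes $\Phi_{(n)}$ well defined and bimodule-linear rather than merely a vector-space map). Once the scalar is pinned down uniformly, linearity and invertibility are formal.
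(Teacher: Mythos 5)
Your proposal follows essentially the same route as the paper: both identify $\L_n\otimes_{\Apq}\Omega^{(0,1)}(\pq)$ with $\L_{n-2}\,\omega_-$ and $\Omega^{(0,1)}(\pq)\otimes_{\Apq}\L_n$ with $\omega_-\,\L_{n-2}$ via Proposition~\ref{masu} and \eqref{antiholo}, and then invoke the commutation relations \eqref{bi1} inside $\Omega^1(\SU)$ to supply the twisted flip; your explicit tracking of the uniform $q$-power (a single scalar determined by the $\U(1)$-charge of $\cl_{n-2}$) is exactly the content the paper leaves implicit in its diagram. The argument is correct.
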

\begin{proof}
First, note that due to relations \eqref{bi1} not only $\Omega^{(0,1)}(\CP_q^1) \simeq  \L_{-2} \omega_- $ but also $\Omega^{(0,1)}(\CP_q^1) \simeq  \omega_- \L_{-2}$ as $\Apq$-bimodules
(this was indeed given in \eqref{antiholo}). It follows, using Proposition \ref{masu}, 
that we have the following diagram of $\Apq$-bimodules:
$$
\xymatrix{
\L_n\otimes_{\Apq}  \Omega^{(0,1)}(\CP_q^1) \ar@{-->}[r]\ar^\wr[d] &\Omega^{(0,1)}(\CP_q^1) \otimes_{\Apq} \L_n \ar^\wr[d] \\
\L_{n-2} \omega_- \ar^{\sim}[r] &  \omega_- \L_{n-2}
}
$$
where for the bottom row we have used again the commutation relations \eqref{bi1} that hold inside $\Omega^1(\SU)$. The dashed arrow is the desired bimodule isomorphism $\Phi_{(n)}$.
\end{proof}

Given the twisted flip, the following propositions can be easily verified.
\begin{prop}
The holomorphic structure $\nablabar$ on $\L_n$ is a bimodule connection with $\sigma(\nablabar) = \Phi_{(n)}$, {i.e.} it satisfies the twisted right Leibniz rule
$$
\nablabar(\eta f ) = \nablabar(\eta) f + \Phi_{(n)} ( \eta \otimes \delbar f), \qquad \textup{for} \qquad \eta \in \L_n, \quad f \in \Apq.
$$
\end{prop}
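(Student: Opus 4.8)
The plan is to verify the twisted right Leibniz rule
\[
\nablabar(\eta f) = \nablabar(\eta) f + \Phi_{(n)}(\eta \otimes \delbar f), \qquad \eta \in \L_n,\ f \in \Apq,
\]
by direct computation, using the explicit formula $\nablabar \phi = q^{-n+2}\,\omega_-\,(X_-\lt \phi)$ from \eqref{coher} together with the left action of $X_-$ as a twisted derivation. First I would recall that since $X_- = q^{-1/2}FK$ and $F$ acts as a twisted derivation with respect to the coproduct $\cop F = F\otimes K + K^{-1}\otimes F$, the operator $X_-$ satisfies a twisted Leibniz rule on products in $\ASU$, namely $X_-\lt(\eta f) = (X_-\lt \eta)(K^{2}\lt f) + (K^{-2}\lt \eta)(X_-\lt f)$ up to the appropriate powers of $q$ coming from the shift $X_- = q^{-1/2}FK$. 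Applied to $\eta \in \L_n$ and $f \in \Apq=\cl_0$, the element $K$ acts as the identity on $f$, so the twisted action collapses and one obtains $X_-\lt(\eta f) = (X_-\lt \eta)\,f + (K^{-2}\lt \eta)(X_-\lt f)$.

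Next I would substitute this into the defining formula for $\nablabar$. The first term $(X_-\lt\eta)f$ produces exactly $\nablabar(\eta)f$ after restoring the prefactor $q^{-n+2}\omega_-$. For the second term, $(K^{-2}\lt\eta)(X_-\lt f)$, I would note that $K^{-2}\lt\eta = q^{n}\eta$ since $\eta\in\cl_n$ (using the first line of \eqref{lact}, or the definition \eqref{libu} of $\cl_n$ via the $\U(1)$-grading), so this term becomes a scalar multiple of $\eta\,(X_-\lt f)$, and $X_-\lt f = \delbar f / \omega_-$ identifies it with $\eta\otimes\delbar f$. The crux is then to check that the power of $q$ bookkeeping, combined with the $\Apq$-bimodule commutation relations \eqref{bi1} that let $\omega_-$ pass through elements of $\cl_n$, reproduces precisely the map $\Phi_{(n)}$ constructed in the previous lemma.

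Concretely, I would trace the image of $\eta\otimes\delbar f$ through the diagram defining $\Phi_{(n)}$: the left vertical isomorphism $\L_n\otimes_{\Apq}\Omega^{(0,1)} \simeq \cl_{n-2}\omega_-$, the bottom horizontal flip $\cl_{n-2}\omega_- \xrightarrow{\sim} \omega_-\cl_{n-2}$ implemented by \eqref{bi1}, and the right vertical isomorphism back to $\Omega^{(0,1)}\otimes_{\Apq}\L_n$. Since $\delbar f = q^{2}\omega_-\{\cdots\}$ carries the characteristic factor $q^{\pm 2}$ recorded in \eqref{antiholo}, the relevant commutation relation $\omega_- c = q^{-1}c\omega_-$ and $\omega_- c^* = q\,c^*\omega_-$ contributes precisely the powers of $q$ needed to match $q^{-n+2}$ and the scalar $q^{n}$ from the $\cl_n$-grading. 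The main obstacle is therefore purely the exponent arithmetic: one must confirm that these three sources of $q$-powers—the normalization $q^{-n+2}$ in \eqref{coher}, the grading factor $q^{n}$ from $K^{-2}\lt\eta$, and the bimodule twist in \eqref{bi1}—conspire to equal exactly the isomorphism $\Phi_{(n)}$ rather than some scalar multiple of it. Once this bookkeeping is pinned down, the identity $\sigma(\nablabar)=\Phi_{(n)}$ follows, and both the left Leibniz rule and flatness are already established (the latter because the curvature \eqref{curv} is a $(1,1)$-form, hence $\nablabar^2=0$), completing the verification that $\nablabar$ is a bimodule connection.
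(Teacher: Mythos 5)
Your strategy---direct verification from $\nablabar\phi=q^{-n+2}\omega_-(X_-\lt\phi)$, the coproduct of $X_-$, and the bimodule relations \eqref{bi1}---is the intended one (the paper offers nothing beyond ``can be easily verified''), but two things go wrong in the execution. First, the twisted Leibniz rule you write down for $X_-$ is incorrect. From $\cop X_-=1\otimes X_-+X_-\otimes K^2$ and $h\lt(xy)=(\co{h}{1}\lt x)(\co{h}{2}\lt y)$ one gets
$$
X_-\lt(\eta f)=(X_-\lt\eta)(K^2\lt f)+\eta\,(X_-\lt f),
$$
with \emph{no} twist on $\eta$ in the second term: the $K^{-1}$ appearing in $\cop F$ is exactly absorbed by the extra $K$ in $X_-=q^{-1/2}FK$. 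Your spurious factor $K^{-2}\lt\eta$ is moreover evaluated with the wrong sign: on $\cl_n$ one has $K\lt\eta=q^{n/2}\eta$, hence $K^{-2}\lt\eta=q^{-n}\eta$, not $q^{n}\eta$. Had you carried your version through, the second term would acquire an extra factor $q^{\mp n}$ and the claimed identity would appear to fail; this is not something the bimodule relations can be expected to ``conspire'' to cancel.

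Second, you explicitly defer the exponent arithmetic, calling it ``the main obstacle''---but that arithmetic \emph{is} the entire content of the proposition, so the argument is incomplete as written. With the corrected Leibniz rule it closes in two lines: since $f\in\cl_0$ is $K$-invariant, $X_-\lt(\eta f)=(X_-\lt\eta)f+\eta(X_-\lt f)$, and since $\eta f\in\cl_n$,
$$
\nablabar(\eta f)=q^{-n+2}\omega_-(X_-\lt\eta)\,f+q^{-n+2}\omega_-\,\eta\,(X_-\lt f)=\nablabar(\eta)\,f+q^{-n+2}\omega_-\,\eta\,(X_-\lt f).
$$
For the last term note that $\eta(X_-\lt f)\in\cl_{n-2}$ and that \eqref{bi1} gives $\omega_-x=q^{m}x\,\omega_-$ for $x\in\cl_m$ (consistent with \eqref{antiholo}); hence $q^{-n+2}\,\omega_-\,\eta(X_-\lt f)=\eta\,(X_-\lt f)\,\omega_-=\eta\cdot\delbar f$, which under the identifications $\cl_{n-2}\,\omega_-\simeq\omega_-\,\cl_{n-2}$ of the twisted-flip lemma is precisely $\Phi_{(n)}(\eta\otimes\delbar f)$. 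The powers of $q$ cancel identically, with no residue, exactly because there is no $K^{-2}$ acting on $\eta$. Your concluding remarks on the left Leibniz rule and flatness are fine.
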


\begin{prop}
Let $(\L_{n_i}, \nablabar_{n_i})$, $i=1,2$, be two line bundles with standard holomorphic structure. Then the tensor product connection $\nablabar_{n_1} \otimes 1 + (\Phi_{(n_1)} \otimes 1)(1 \otimes \nablabar_{n_2})$ coincides with the standard holomorphic structure on $\L_{n_1} \otimes_{\Apq} \L_{n_2}$ when identified with $\L_{n_1+n_2}$.
\end{prop}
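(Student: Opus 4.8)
The plan is to verify the identity on decomposable tensors $\eta\otimes_{\Apq}\xi$, with $\eta\in\cl_{n_1}$ and $\xi\in\cl_{n_2}$, since these span $\cl_{n_1}\otimes_{\Apq}\cl_{n_2}$ and both connections are $\IC$-linear. The main simplification I would use is to realise all three relevant identifications concretely inside the calculus $\Omega^1(\SU)$: by Proposition~\ref{masu} the isomorphism $\cl_{n_1}\otimes_{\Apq}\cl_{n_2}\simeq\cl_{n_1+n_2}$ is multiplication $\eta\otimes\xi\mapsto\eta\xi$; the identification $\Omega^{(0,1)}(\pq)\otimes_{\Apq}\cl_n\simeq\omega_-\cl_{n-2}$ is again multiplication; and the twisted flip $\Phi_{(n_1)}$ of the preceding Lemma is realised as the operation of commuting $\omega_-$ past elements of $\cl_{n_1}$ by means of the bimodule relations \eqref{bi1}. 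In this picture the standard holomorphic structure is simply $\nablabar_{n}\phi = q^{-n+2}\omega_-(X_-\lt\phi)$ from \eqref{coher}, an element of $\omega_-\cl_{n-2}$.

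I would then isolate two computational inputs. First, writing $X_-=q^{-1/2}FK$ with $\cop X_- = 1\otimes X_- + X_-\otimes K^2$, and using that $K$ acts on $\cl_n$ by $q^{n/2}$ (so that $K^2\lt\xi = q^{n_2}\xi$ for $\xi\in\cl_{n_2}$, as follows from the $K$-eigenvalue computation on the PBW basis), the element $X_-$ acts on products as the twisted derivation
\begin{equation}
\label{eq:Xminus-leibniz}
X_-\lt(\eta\xi) = \eta\,(X_-\lt\xi) + q^{n_2}\,(X_-\lt\eta)\,\xi .
\end{equation}
Second, from \eqref{bi1} (equivalently from the commutation $\{a^2,ca,c^2\}\omega_- = q^2\omega_-\{a^2,ca,c^2\}$ of \eqref{antiholo}) one obtains, for every $\eta\in\cl_{n_1}$, the single scalar relation
\begin{equation}
\label{eq:omega-flip}
\eta\,\omega_- = q^{-n_1}\,\omega_-\,\eta ,
\end{equation}
the net exponent $-n_1$ arising because each factor $a,c$ contributes $q$ and each $a^*,c^*$ contributes $q^{-1}$. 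Equation \eqref{eq:omega-flip} is exactly the scalar implementing $\Phi_{(n_1)}$ in the product realisation.

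With these in hand I would evaluate $\nablabar_{n_1}\otimes 1 + (\Phi_{(n_1)}\otimes 1)(1\otimes\nablabar_{n_2})$ on $\eta\otimes\xi$. The first term is $\nablabar_{n_1}(\eta)\,\xi = q^{-n_1+2}\omega_-(X_-\lt\eta)\,\xi$. For the second term I substitute $\nablabar_{n_2}(\xi)=q^{-n_2+2}\omega_-(X_-\lt\xi)$ and then apply $\Phi_{(n_1)}$, which by \eqref{eq:omega-flip} carries $\omega_-$ to the left of $\eta$ and produces $q^{-n_1-n_2+2}\omega_-\,\eta\,(X_-\lt\xi)$. On the other side, expanding $\nablabar_{n_1+n_2}(\eta\xi)=q^{-(n_1+n_2)+2}\omega_-\bigl(X_-\lt(\eta\xi)\bigr)$ by the twisted derivation rule \eqref{eq:Xminus-leibniz} gives precisely
\begin{equation}
\label{eq:target-expand}
\nablabar_{n_1+n_2}(\eta\xi) = q^{-n_1-n_2+2}\,\omega_-\,\eta\,(X_-\lt\xi) + q^{-n_1+2}\,\omega_-\,(X_-\lt\eta)\,\xi .
\end{equation}
The first summand of \eqref{eq:target-expand} matches the second term of the tensor product connection and the second summand matches the first term, so the two agree on $\eta\otimes\xi$, hence everywhere.

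The only real obstacle is bookkeeping: one must track the $q$-powers coming from three sources --- the prefactors $q^{-n_i+2}$ of \eqref{coher}, the twist $q^{n_2}$ in \eqref{eq:Xminus-leibniz}, and the flip scalar $q^{-n_1}$ in \eqref{eq:omega-flip} --- and check that they conspire to the single prefactor $q^{-(n_1+n_2)+2}$ together with the correct twisted structure. Conceptually this is no accident: it expresses the compatibility of the bimodule twist $\sigma(\nablabar)=\Phi_{(n_1)}$ with the coproduct of $X_-$, namely the cancellation of $q^{n_2}$ against $q^{-n_1}$ seen above.
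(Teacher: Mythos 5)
Your proof is correct and is precisely the verification the paper leaves to the reader (the paper states only that, given the twisted flip, the proposition ``can be easily verified''). Your two computational inputs --- the twisted Leibniz rule for $X_-$ coming from $\cop X_- = 1\otimes X_- + X_-\otimes K^2$ with $K^2$ acting by the scalar $q^{n_2}$ on $\cl_{n_2}$, and the commutation $\eta\,\omega_- = q^{-n_1}\omega_-\,\eta$ realising $\Phi_{(n_1)}$ inside $\Omega^1(\SU)$ --- are exactly the right ones, and the $q$-power bookkeeping checks out.
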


\section{Holomorphic structures on the quantum projective line}\label{se:hsqpl}
We will study more closely the holomorphic structure on the quantum projective line given in terms of the $\del$ and $\delbar$-operator in \eqref{dedeb} and of the $\nabla^{\dol}$ and 
$\nabla^{\dolb}$-connection in \eqref{coher}. We start with holomorphic functions before moving to holomorphic sections.

\subsection{Holomorphic functions}
~\\
Recall the Definition \ref{defn:holo-funct} above of holomorphic (polynomial) functions. On $\CP_q^1$ these are elements in the kernel of the operator $\delbar : \Apq \to \Omega^{(0,1)}(\CP_q^1)$. Equivalently (cf. Eq. \eqref{dedeb}) these are elements in the kernel of $F = q^{1/2} X_-: \A(\CP_q^1) \to \A(\SU)$, acting as in \eqref{lact}. We will derive the triviality of this kernel from the analogous, but more general, result in the Hilbert space $L^2(\CP_q^1)$.

Recall from Section \ref{subsection:hilbert} that there is an injective map $\A(\CP_q^1) \to L^2(\CP_q^1)$; it is the composition of the map fom $\A(\CP_q^1)$ into its $C^*$-algebraic completion with the (restriction of the) GNS-map $\eta: C(\CP_q^1) \to L^2(\CP_q^1)$. These maps are equivariant with respect to the left $\su$ action, so that triviality of the kernel of $F$ in $\A(\CP_q^1)$ would follow from triviality of the kernel of $\sigma(F)$ in $L^2(\CP_q^1)$, acting in the representation \eqref{eq:uqsu2-repns}. Since the operator $\sigma(F)$ on $L^2(\SU)$ is unbounded we need to specify its domain and we choose
\begin{equation}
\label{L2domF}
\Dom(\sigma(F)) := \{ \psi \in L^2(\SU): \sigma(F) \psi \in L^2(\SU)\}.
\end{equation}
It clearly contains the image of $\A(\CP_q^1)$ inside $L^2(\CP_q^1) \subset L^2(\SU)$ under the map $\eta$ above. Indeed, any polynomial in the $B_0$ and $B_\pm$ when mapped in $L^2(\CP_q^1)$ can be written as a finite linear combination of the basis vectors $\ket{lmn}$ through the relations \eqref{podgens} and \eqref{eq:matelt-onb}. 
Let us now consider the restriction of $\sigma(F)$ to an operator $L^2(\CP_q^1) \to L^2(\SU)$.

\begin{prop}
\label{prop:noholomL2}
The kernel of $\sigma(F)$ restricted to $L^2(\CP_q^1)$ is $\complex$.
\end{prop}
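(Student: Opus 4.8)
The plan is to show that the kernel of $\sigma(F)$, restricted to the $\U(1)$-invariant subspace $L^2(\CP_q^1)$, reduces to the constants. Recall from \eqref{repU1} that $L^2(\CP_q^1)$ is the closed span of the vectors $\ket{lm0}$ with $l \in \N$ and $m = -l,\dots,l$, that is, precisely those basis vectors whose third (spin) index $n$ equals zero. So I would start by writing an arbitrary element $\psi \in \Dom(\sigma(F)) \cap L^2(\CP_q^1)$ as a (possibly infinite) $\ell^2$-combination $\psi = \sum_{l,m} \lambda_{lm} \ket{lm0}$ and compute $\sigma(F)\psi$ explicitly using the last line of \eqref{eq:uqsu2-repns}.

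The key observation is that $\sigma(F)$ \emph{lowers} the spin index $n$ by one: from \eqref{eq:uqsu2-repns} we have $\sigma(F)\ket{lm0} = \sqrt{[l-0+1][l+0]}\,\ket{lm,-1} = \sqrt{[l+1][l]}\,\ket{lm,-1}$. Since the target vectors $\ket{lm,-1}$ for distinct $(l,m)$ are mutually orthogonal (being distinct elements of the orthonormal basis of $L^2(\SU)$), the condition $\sigma(F)\psi = 0$ forces, coefficient by coefficient, $\lambda_{lm}\sqrt{[l+1][l]} = 0$ for every $(l,m)$. The next step is to examine the $q$-number factor $[l]$: for $0<q<1$ one has $[l] = \frac{q^{-l}-q^l}{q^{-1}-q} = 0$ if and only if $l=0$, and $[l+1]$ never vanishes. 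Hence $\sqrt{[l+1][l]} \neq 0$ whenever $l \geq 1$, which forces $\lambda_{lm} = 0$ for all $l \geq 1$. For $l = 0$ the only basis vector is $\ket{000}$, which spans the constants $\IC \cdot 1 = \IC \cdot \eta(t^0_{00})$, and these survive.

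Therefore $\ker\bigl(\sigma(F)|_{L^2(\CP_q^1)}\bigr) = \IC\,\ket{000} \cong \IC$, as claimed. The argument is almost purely a bookkeeping computation once the representation formulas \eqref{eq:uqsu2-repns} and the orthonormality of the basis \eqref{eq:matelt-onb} are in hand; the only genuinely substantive point to verify carefully is the nonvanishing of the $q$-numbers. The single potential subtlety—the main obstacle I would want to treat with care—is the passage from finite combinations to the full Hilbert-space domain: I must ensure that for a general $\psi$ in the domain \eqref{L2domF} the application of $\sigma(F)$ genuinely commutes with the infinite sum, so that the vanishing of $\sigma(F)\psi$ can be read off coefficient-wise. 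This is handled by noting that $\sigma(F)$ acts diagonally in the block-decomposition indexed by $(l,m)$ and maps orthogonal vectors to orthogonal vectors, so on the domain where $\sigma(F)\psi \in L^2(\SU)$ one has $\|\sigma(F)\psi\|_0^2 = \sum_{l,m} |\lambda_{lm}|^2\,[l+1][l]$, and this sum vanishes if and only if each term does.
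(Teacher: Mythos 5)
Your proof is correct and follows essentially the same route as the paper: expand $\psi$ in the orthonormal basis $\ket{lm0}$, apply the formula $\sigma(F)\ket{lm0}=\sqrt{[l+1][l]}\,\ket{lm,-1}$ from \eqref{eq:uqsu2-repns}, and use orthogonality of the target vectors together with the nonvanishing of $[l+1][l]$ for $l\geq 1$ to kill all coefficients except the one on $\ket{000}$. Your extra remark justifying the coefficient-wise reading via $\|\sigma(F)\psi\|_0^2=\sum_{l,m}|\lambda_{lm}|^2[l+1][l]$ is a small but welcome refinement of the paper's argument, which treats this point implicitly.
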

\begin{proof}
Let $\psi = \sum_{l,m} \psi_{lm} \ket{lm0} \in L^2(\CP_q^1)$ be in $\ker \sigma(F)$. Then
$$
0 = \sigma(F) \psi = \sum_{l,m} \psi_{lm} \sqrt{[l+1][l]} \ket{l,m,-1}.
$$
Since the $\ket{l,m,-1}$ are linearly independent, and $[l+1][l] \neq 0$ as long as $l\neq0$, we conclude that the $\psi_{lm} =0$ for all $l,m$, unless $l=0$ whence $\psi=\psi_{00} \in \complex$.
\end{proof}

\begin{corl}\label{corl:noholom}
There are no non-trivial holomorphic polynomial functions on $\CP_q^1$. 
\end{corl}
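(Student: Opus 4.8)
The plan is to deduce the corollary directly from Proposition~\ref{prop:noholomL2} by exploiting the equivariance of the GNS map. Recall from Definition~\ref{defn:holo-funct} that $\O(\Apq) = \ker\{\delbar : \Apq \to \Omega^{(0,1)}(\CP_q^1)\}$, and from \eqref{dedeb} that $\delbar f = (X_-\lt f)\,\omega_-$. Since $\omega_-$ is a free generator of the bimodule $\Omega^{(0,1)}(\CP_q^1)$, the condition $\delbar f = 0$ is equivalent to $X_- \lt f = 0$, or equivalently $F \lt f = 0$ because $X_- = q^{-1/2}FK$ and $K$ acts as the identity on $\Apq$. Thus holomorphic polynomial functions are precisely $\ker\{F : \Apq \to \ASU\}$.

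First I would invoke the injective, $\su$-equivariant GNS map $\eta : \Apq \to L^2(\CP_q^1)$ described in Section~\ref{subsection:hilbert}, noting that equivariance means $\sigma(g)\,\eta(f) = \eta(g \lt f)$ for all $g \in \su$, $f \in \Apq$. Applying this with $g = F$ gives $\eta(F \lt f) = \sigma(F)\,\eta(f)$. Hence if $f \in \Apq$ satisfies $F \lt f = 0$, then $\sigma(F)\,\eta(f) = \eta(0) = 0$, so $\eta(f)$ lies in the kernel of $\sigma(F)$ restricted to $L^2(\CP_q^1)$. By Proposition~\ref{prop:noholomL2} this kernel is $\IC$, so $\eta(f)$ is a scalar multiple of $\eta(1)$. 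Injectivity of $\eta$ then forces $f \in \IC$, which is exactly the claim that the only holomorphic polynomial functions are the constants.

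The routine verifications to check are that $F \lt f$ indeed lands in $\Apq$ (it does, since $F \lt \cl_0 \subset \cl_{-2}$ by \eqref{rellb}, and we apply $\eta$ after restricting to $\CP_q^1$), and that the image $\eta(\Apq)$ sits inside $\Dom(\sigma(F))$ as already observed in the paragraph preceding Proposition~\ref{prop:noholomL2}. The only mild subtlety, and the step I expect to be the main obstacle, is the passage from the \emph{algebraic} left action of $F$ on $\Apq$ to its \emph{Hilbert space} realization $\sigma(F)$: one must confirm that the equivariance relation $\sigma(g)\pi(f) = \pi(g \lt f)$ recorded in Section~\ref{subsection:hilbert} descends correctly to the GNS vectors and that $\eta$ intertwines the two actions on the nose. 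Once this compatibility is in place the argument is immediate, since everything else is a clean application of injectivity together with the already-established triviality of $\ker\sigma(F)$ on $L^2(\CP_q^1)$.
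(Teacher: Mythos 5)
Your argument is correct and is essentially the paper's own: the corollary is deduced from Proposition~\ref{prop:noholomL2} via the injective, $\su$-equivariant GNS map, which identifies $\delbar f=0$ with $F\lt f=0$ and sends $\ker F|_{\Apq}$ into $\ker\sigma(F)|_{L^2(\CP_q^1)}=\IC$. (One cosmetic slip: $F\lt f$ lands in $\cl_{-2}\subset\ASU$, not in $\Apq$, which is precisely why the paper views $\sigma(F)$ as a map $L^2(\CP_q^1)\to L^2(\SU)$; this does not affect your argument.)
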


\bigskip

We next turn to the question of the existence of non-trivial holomorphic {\it continuous} functions on $\CP_q^1$. Thus, we consider the kernel of $\delbar$ in $C(\SU)$; since $\delbar$ is an unbounded derivation, we define its domain in $C(\SU)$ as
\begin{equation}
\label{domF}
\Dom(\delbar) := \{ f \in C(\SU) : \| F \lt f \| < \infty \}.
\end{equation}
We again have as a corollary to Proposition \ref{prop:noholomL2}
\begin{corl}
There are no non-trivial holomorphic functions in $\Dom(\delbar) \cap C(\CP_q^1)$.
\end{corl}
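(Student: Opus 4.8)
The plan is to deduce this from the $L^2$-level result Proposition~\ref{prop:noholomL2} by transporting the problem along the GNS map. Observe first that $\ket{000} = [1]^{1/2}\,\eta(t^0_{00}) = \eta(1)$, since $[1]=1$, so the cyclic vector of the GNS representation is $\eta(1)\in L^2(\CP_q^1)$. Now I would unravel the hypothesis: a holomorphic function in $\Dom(\delbar)\cap C(\CP_q^1)$ is an $f$ lying both in $C(\CP_q^1)$ and in $\Dom(\delbar)$ with $\delbar f = 0$; by the expression \eqref{dedeb} for $\delbar$ together with $F = q^{1/2}X_-$, this is the same as $F\lt f = 0$ in $C(\SU)$.

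Next I would move to the Hilbert space. The GNS map $\eta \colon C(\CP_q^1)\to L^2(\CP_q^1)$ is injective and norm-contractive, $\|\eta(g)\|_0 = \|\pi(g)\,\eta(1)\|_0 \le \|g\|$, and it intertwines the left $\su$-action with the representation $\sigma$, so that $\sigma(F)\,\eta(g) = \eta(F\lt g)$ on polynomials. (On the cyclic vector this is forced by $\cop F = F\otimes K + K^{-1}\otimes F$ together with $\sigma(K)\ket{000}=\ket{000}$ and $\sigma(F)\ket{000}=0$, the latter because $[0]=0$ in \eqref{eq:uqsu2-repns}.) Granting for the moment that this identity persists on $\Dom(\delbar)$, we obtain
$$
\sigma(F)\,\eta(f) = \eta(F\lt f) = \eta(0) = 0 ,
$$
with $\eta(f)\in L^2(\CP_q^1)$ because $f\in C(\CP_q^1)$. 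Proposition~\ref{prop:noholomL2} then forces $\eta(f)\in\complex\,\ket{000} = \complex\,\eta(1)$, and injectivity of $\eta$ yields $f\in\complex\,1$. Hence there are no non-constant such functions.

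The \emph{main obstacle} is precisely the clause ``this identity persists on $\Dom(\delbar)$'': the intertwining $\sigma(F)\,\eta(\cdot) = \eta(F\lt\cdot)$ was only set up on the dense polynomial subalgebra, whereas $f$ is merely continuous. The cleanest resolution is to read the unbounded derivation $\delbar$ on $C(\SU)$, with domain \eqref{domF}, as the restriction of the closed operator $\sigma(F)$ (the maximal operator of \eqref{L2domF}) along the injection $\eta$: under this reading $f\in\Dom(\delbar)$ means exactly that $\eta(f)\in\Dom(\sigma(F))$ with $\sigma(F)\eta(f)\in\eta(C(\SU))$, and $F\lt f$ is by definition the element with $\eta(F\lt f) = \sigma(F)\eta(f)$, making the displayed step a tautology. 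If one prefers an intrinsic definition of $F\lt f$, one must instead check that the polynomials form a core, i.e.\ that $f$ admits polynomial approximants $p_k\to f$ with $F\lt p_k\to F\lt f$ in $C(\SU)$; then $\eta(p_k)\to\eta(f)$ and $\sigma(F)\eta(p_k)=\eta(F\lt p_k)\to\eta(F\lt f)$, so closedness of $\sigma(F)$ gives $\sigma(F)\eta(f)=\eta(F\lt f)$. Either way the entire analytic content is confined to this compatibility of $\delbar$ with $\sigma(F)$ through $\eta$, the kernel computation itself having already been carried out in Proposition~\ref{prop:noholomL2}.
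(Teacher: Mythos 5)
Your proposal is correct and follows essentially the same route as the paper's own two-line proof: transport $f$ to $L^2(\CP_q^1)$ via the injective GNS map, invoke the equivariance $\sigma(F)\,\eta(\cdot)=\eta(F\lt\cdot)$ to place $\eta(f)$ in $\ker\sigma(F)$, and conclude from Proposition~\ref{prop:noholomL2} together with injectivity of $\eta$. The only difference is that you make explicit the compatibility of the domain \eqref{domF} with the closed operator $\sigma(F)$ through $\eta$, a point the paper subsumes under the phrase ``equivariance of the GNS-representation''; your care here is warranted but does not change the argument.
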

\begin{proof}
If $f \in \ker(\delbar)|_{C(\CP_q^1)}$, by the equivariance of the GNS-representation the corresponding elements $\eta(f) \in L^2(\CP_q^1)$ under the continuous GNS map should be in the kernel of $\sigma(F)$. By Proposition \ref{prop:noholomL2}, the only possibility is that $f$ be a constant.
\end{proof}

Consequently $\O(\CP_q^1) \simeq \IC$ (with a slight abuse of notation),  a result which completely parallels the classical case ($q=1$) of holomorphic functions on the Riemann sphere. 

\subsection{Holomorphic vector bundles}\label{se:hvb}
~\\
In this section we shall study  holomorphic vector bundles on $\CP_q^1$ coming from the natural splitting of the connection $\nabla$ defined in \eqref{coder2d} into a holomorphic and anti-holomorphic part. The anti-holomorphic connection on the modules $\cl_n$ is given by  (cf. \eqref{coher}): 
$$  
\nablabar_{(n)} = q^{-n+2} \omega_{-} \left(X_{-}\lt\phi\right) ,
$$
that we simply denote by $\nablabar$ when no confusion arises.

\begin{thm}\label{holsec}
Let $n$ be a positive integer. Then
\begin{enumerate}
\item $H^0(\L_n, \nablabar) = 0$,
\item $H^0(\L_{-n}, \nablabar) \simeq \IC^{n+1}$. 
\end{enumerate}
These results continue to hold when considering continuous sections $\Gamma(\L_n)$  as modules over the $C^*$-algebra $C(\CP_q^1)$. 
\end{thm}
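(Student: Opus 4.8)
The plan is to identify the space of holomorphic sections $H^0(\L_n,\nablabar)$ with the kernel of the twisted derivation $F\lt$ on the module $\cl_n$, and then to pin this kernel down by transporting the problem to the Hilbert space $L^2(\L_n)$, where $F$ is diagonalised by the orthonormal basis $\ket{lmn}$. First I would reduce the cohomological statement to a kernel computation. By Definition \ref{defn:holo-sect}, $H^0(\L_n,\nablabar)=\ker\nablabar$ on $\cl_n$, while by \eqref{coher} we have $\nablabar\phi=q^{-n+2}\,\omega_-(X_-\lt\phi)$. Since $\Omega^1(\SU)$ is free over $\ASU$ on $\{\omega_-,\omega_+,\omega_z\}$ (Proposition \ref{3dsu}), left multiplication by $\omega_-$ is injective; and as $K$ acts invertibly (by a power of $q$) on $\cl_n$ while $X_-=q^{-1/2}FK$, one obtains
$$
H^0(\L_n,\nablabar)=\ker\left(F\lt:\cl_n\to\cl_{n-2}\right).
$$
The equivariance and injectivity of the GNS map $\eta$ (Section \ref{subsection:hilbert}) then embed this kernel into that of $\sigma(F)$ acting on $L^2(\L_n)$, reducing everything to a computation and an upper bound on the Hilbert-space side.

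Next I would compute the Hilbert-space kernel from \eqref{eq:uqsu2-repns}. On $L^2(\L_n)$, spanned by the vectors $\ket{l,m,n/2}$, one has $\sigma(F)\ket{l,m,n/2}=\sqrt{[l-n/2+1][l+n/2]}\,\ket{l,m,n/2-1}$, which vanishes precisely when $l=n/2-1$ or $l=-n/2$, using that $[s]=0$ iff $s=0$ for $0<q<1$. For $n>0$ neither value is admissible, since both lie strictly below the minimal weight $l=n/2$; hence the kernel is trivial and part (1) follows. For $\L_{-n}$ with $n>0$ the vanishing condition becomes $l=n/2$ (the root $l=-n/2-1<0$ being excluded), and at this lowest weight $m$ ranges over the $n+1$ values $-n/2,\dots,n/2$, giving $\dim\ker\sigma(F)=n+1$ and therefore the upper bound $\dim H^0(\L_{-n},\nablabar)\le n+1$.

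Finally I would saturate this bound by exhibiting explicit holomorphic sections. From the generating set \eqref{qpro}, the monomials $\{a^{\mu}c^{\,n-\mu}:\mu=0,\dots,n\}$ lie in $\cl_{-n}$, and since $F\lt a=F\lt c=0$ together with the twisted Leibniz rule coming from $\Delta(F)=F\otimes K+K^{-1}\otimes F$ give $F\lt(a^{\mu}c^{\,n-\mu})=0$, each is holomorphic. They are distinct PBW monomials, hence linearly independent, so they span an $(n+1)$-dimensional subspace of the kernel; combined with the upper bound this proves part (2). For the continuous statement I would run the same argument with the domain \eqref{domF}: the polynomial sections above lie in $\Gamma(\L_{-n})$ and in $\Dom(\delbar)$, while the continuous, equivariant GNS map again bounds the kernel above by the identical $L^2$ computation.

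The main obstacle I expect is not the Hilbert-space diagonalisation, which is routine once the weight labels are matched, but the passage from the $L^2$ upper bound back to the algebra and to $C(\CP_q^1)$: the Hilbert-space picture only bounds the kernel from above, so one genuinely needs the explicit polynomial sections $a^{\mu}c^{\,n-\mu}$ and the verification, via the coproduct of $F$, that they are annihilated. Care is also required in matching the third weight label $n/2$ with the $K$-eigenvalue on $\cl_n$ and the admissible range of $l$, so that the counting of zero modes comes out exactly as $n+1$.
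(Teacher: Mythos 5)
Your proposal is correct and follows essentially the same route as the paper: reduce $\ker\nablabar$ on $\cl_{\pm n}$ to the kernel of $\sigma(F)$ on $L^2(\L_{\pm n})$ via the equivariant, injective GNS map, then diagonalise $\sigma(F)$ on the basis $\ket{l,m,\pm n/2}$ and locate the vanishing of $\sqrt{[l\mp n/2+1][l\pm n/2]}$. Your extra step of saturating the $L^2$ upper bound with the explicit polynomial sections $a^{\mu}c^{n-\mu}$ (checked via $\Delta(F)=F\otimes K+K^{-1}\otimes F$) is a welcome precision that the paper handles only implicitly, by noting that the surviving kernel vectors $\ket{n/2,m,-n/2}$ are images of the matrix elements $t^{n/2}_{m,-n/2}\in\cl_{-n}$.
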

\begin{proof}
We derive this from the more general result in the Hilbert spaces $L^2(\L_n)$. There, an element $\phi \in L^2(\L_n)$ is in the kernel of $\nablabar$ if and only it is in $\Dom(\sigma(F))$ defined in Equation \eqref{L2domF} (intersected with $L^2(\L_n)$) and such that $\sigma(F) \phi = 0$. This follows easily from the definition of the anti-holomorphic connection in \eqref{coher}.

For $n>0$, let $\phi = \sum_{l,m} \phi_{lm} \ket{l,m,n/2} \in L^2(\L_n)$ be in $\ker \sigma(F)$. Then
$$
0=\sigma(F) \phi = \sum_{l,m} \phi_{lm} \sqrt{[l-n/2+1][l+n/2]} \ket{l,m,n/2-1} .
$$
Since the $\ket{l,m,n/2-1}$ are linearly independent, and $[l-n/2+1][l+n/2] \neq 0$ as long as $l+n/2\neq0$, we conclude that the $\phi_{lm} =0$ for all $l,m$ (since $l \geq n/2 >0$). 

For the second statement, let $\phi = \sum_{l,m} \phi_{lm} \ket{lm,-n/2} \in L^2(\L_{-n})$ be in $\ker \sigma(F)$. Then
$$
0=\sigma(F) \phi = \sum_{l,m} \phi_{lm,-n} \sqrt{[l+n/2+1][l-n/2]} \ket{lm,-n/2-1} .
$$
Now $[l+n/2+1][l-n/2]$ vanishes for $l=n/2$ so that $\phi_{lm} = 0$ unless $l=n/2$. With this restriction
the integer label $m$ in $\phi_{lm}$ runs from $-n/2$ to $n/2$ thus giving $n+1$ complex degrees of freedom.
\end{proof}

We finally have:
\begin{thm}
The space $R=\bigoplus_{n\geq 0} H^0(\L_{-n}, \nablabar)$ carries a ring structure and is isomorphic to the quantum plane: 
$$
R \simeq \IC \langle a,c \rangle /  (ac - q ca)
 $$
\end{thm}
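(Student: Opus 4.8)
The plan is to identify $R=\bigoplus_{n\geq 0} H^0(\L_{-n},\nablabar)$ as a graded ring and exhibit explicit generators. By Theorem \ref{holsec}, each graded piece $H^0(\L_{-n},\nablabar)$ has dimension $n+1$ over $\IC$, matching the Hilbert series of the quantum plane $\IC\langle a,c\rangle/(ac-qca)$, whose degree-$n$ part is spanned by the $n+1$ monomials $a^{n-\mu}c^{\mu}$. So first I would locate a concrete basis of holomorphic sections in each degree. From the proof of Theorem \ref{holsec}(2), $H^0(\L_{-n},\nablabar)$ corresponds to the vectors $\ket{n/2,m,-n/2}$ with $m=-n/2,\dots,n/2$; on the algebraic side, using the generating set \eqref{qpro} for $\cl_{-n}$ (namely $\{a^{\mu}c^{\,n-\mu}\}$) and the explicit left action \eqref{lact}, I would check directly that $F\lt(a^{\mu}c^{\,n-\mu})=0$ for all $\mu$, so that these monomials are precisely the holomorphic sections. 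This gives a natural candidate: the degree-$n$ part of $R$ is spanned by $\{a^{\mu}c^{\,n-\mu}\}_{\mu=0}^{n}$.

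Second, I would pin down the multiplication in $R$. The ring structure comes from Proposition \ref{masu}, which gives $\cl_{-n}\otimes_{\Apq}\cl_{-m}\simeq\cl_{-(n+m)}$ via the multiplication map, together with the fact that the tensor product of standard holomorphic structures is again the standard one (the Proposition at the end of \S\ref{se:con}), so that the product of two holomorphic sections is holomorphic and the grading is respected. Thus multiplication in $R$ is just the multiplication of these monomials inside $\ASU$. The only relation among $a$ and $c$ coming from \eqref{derel} that stays within the holomorphic generators is $ac=qca$; the other relations in \eqref{derel} involve $a^*$ or $c^*$, which do not appear among the chosen generators. Hence I would define a graded algebra map
$$
\IC\langle a,c\rangle/(ac-qca)\longrightarrow R
$$
sending the abstract generators to $a,c\in\cl_{-1}$, check it is well defined (the relation $ac-qca$ maps to zero inside $\ASU$), and that it is a surjection in each degree because the monomials $a^{\mu}c^{\,n-\mu}$ span $H^0(\L_{-n},\nablabar)$.

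Third, I would prove injectivity, which is where the real content lies. In the quantum plane the $n+1$ ordered monomials $a^{n-\mu}c^{\mu}$ are linearly independent by construction, so the map is an isomorphism in degree $n$ provided the images $\{a^{\mu}c^{\,n-\mu}\}$ are linearly independent in $\cl_{-n}$. This is \emph{not} automatic, since the $\cl_{-n}$ are not free $\Apq$-modules; rather it is linear independence over $\IC$ that is needed, and for that I would pass to the PBW-basis: the monomials $a^{l-k}c^{k}c^{*l+n}$ form a basis of $\cl_{-n}$, and the purely-holomorphic monomials $a^{\mu}c^{\,n-\mu}$ are exactly the basis elements with $l+n$ replaced appropriately (the $c^{*}$-free ones), hence manifestly linearly independent over $\IC$. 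Matching dimensions ($n+1$ on both sides in each degree) then forces the graded map to be a bijection. I expect the main obstacle to be precisely this bookkeeping: confirming that the abstract monomial basis of the quantum plane maps bijectively onto the distinguished $c^{*}$-free PBW monomials in each $\cl_{-n}$, and verifying that no extra $\IC$-linear relations are introduced by the bimodule identifications of Proposition \ref{masu}. Once the degreewise isomorphism and compatibility of products are established, assembling them into a graded ring isomorphism is routine.
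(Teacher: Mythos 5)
Your proposal is correct and follows essentially the same route as the paper's proof: the ring structure via $\cl_{-n}\otimes_{\Apq}\cl_{-m}\simeq\cl_{-n-m}$, the identification of $H^0(\L_{-1},\nablabar)$ with the span of $a,c$, the relation $ac=qca$, and the dimension count $n+1$ from Theorem \ref{holsec}. You merely spell out more explicitly the steps the paper leaves implicit, namely that the monomials $a^{\mu}c^{\,n-\mu}$ are killed by $F$ (via the coproduct $\Delta F=F\otimes K+K^{-1}\otimes F$ and \eqref{lact}) and that they are $\IC$-linearly independent as $c^*$-free PBW monomials, which gives surjectivity and injectivity degree by degree.
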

\begin{proof}
The ring structure is induced from the tensor product $ \cl_{-n} \otimes_{\Apq} \cl_{-m} \simeq \cl_{-n-m}$. 
From the proof of Theorem~\ref{holsec}, we know that 
$H^0(\L_{-1}, \nablabar)$ is spanned by $\ket{\half, m,-\half}$ with $m=\pm \half$. According to Equations \eqref{eq:matelt-onb} and \eqref{eq:basis}, they correspond to the elements $a,c \in \A( \SU)$. The result then follows from the identity 
$a \otimes_{\Apq} c - q c \otimes_{\Apq} a = 0$ which, as already mentioned, can be easily established.
\end{proof}
Note that this quantum homogeneous coordinate ring $R$ coincides precisely with the twisted homogeneous coordinate rings of \cite{ATB90,AB90} associated to the line bundle $\O(1)$ on $\CP^1$ and a suitable twist.

\section{Twisted positive Hochschild cocycle}\label{se:tphc}
In \cite[Section VI.2]{co94} it is shown that positive Hochschild cocycles on
the algebra of smooth functions on a compact oriented 2-dimensional manifold encode 
the information needed to define  a complex structure on the surface. 
The relevant positive cocycle  is in the same Hochschild cohomology class of the cyclic 
cocycle giving the fundamental class of the manifold.  Although the  problem of characterizing 
complex structures on  $n$-dimensional
manifolds via positive Hochschild cocycles is still open, nevertheless,
Connes'  result  suggests regarding positive cyclic and Hochschild
cocycles as a starting point in defining complex noncommutative structures.

It is well known that there are no non-trivial 2-dimensional cyclic cocycles on the quantum 2-sphere \cite{MNW}. 
Thus we shall try and formulate a notion of \emph{twisted positivity} for twisted Hochschild and cyclic cocycles and exhibit an example of it in the case of our complex structure on the quantum 2-sphere.   

Recall that,  a Hochschild $2n$-cocycle $\varphi$ on an $*$-algebra $A$ is called {\it positive} \cite{CoCu} if the  following pairing defines a positive sesquilinear form on the vector spaces $A^{\otimes
(n+1)}$:
$$
\langle a_0\otimes a_1\otimes \cdots\otimes a_n, \,
  b_0\otimes b_1\otimes \cdots\otimes b_n \rangle = \varphi (
 b_0^* a_0, a_1, \cdots a_n, b_n^*,  \cdots,  b_1^*).
 $$
For $n=0$ one recovers the standard notion of a positive trace on  an
$*$-algebra. 
Given a differential graded $*$-algebra of noncommutative differential forms $(\Omega A, \, \dd)$ on $A$, 
a Hochschild $2n$-cocycle on $A$ defines a sesquilinear pairing on the space $\Omega^n A$ of n-forms (typically these would be middle-degree forms). For $\omega=a_0 \dd a_1 \cdots \dd a_n$ and $\eta=b_0 \dd b_1 \cdots \dd b_n$ one defines 
$$ 
\langle \omega, \, \eta\rangle: = \varphi (b_0^* a_0, a_1, \cdots a_n, b_n^*,  \cdots,  b_1^*) ,
$$
extended by linearity. One has that $\langle a \omega, \, \eta \rangle = \langle  \omega, \, a^* \eta \rangle$ for all $a\in A$.
Positivity of $\varphi$ is equivalent to positivity of this sesquilinear form on $\Omega^n A$.

Before we introduce a twisted analogue of the notion of positivity, we need to briefly recall twisted Hochschild and cyclic cohomologies.

Then, let $A$ be an algebra and $\sigma: A \to A$ be an automorphism of $A$.
For $n \geq 0$,  let $C^n(A)= \Hom_{\mathbb{C}} (A^{\otimes (n+1)}, \mathbb{C})$ be the
space of $(n+1)$-linear functionals (the n-cochains) on $A$. Define the twisted cyclic
operator $\lambda_{\sigma}: C^n (A) \to C^n(A)$  by
$$ (\lambda_{\sigma} \varphi) (a_0, \cdots, a_n)= (-1)^n  \varphi (\sigma
(a_n), a_0, a_1, \cdots, a_{n-1}).$$
Clearly, $ (\lambda_{\sigma}^{n+1} \varphi) (a_0, \cdots, a_n)= \varphi
(\sigma(a_0), \cdots, \sigma (a_n) ) .$
Let 
$$
C^n_{\sigma} (A) = \text{ker} \left ( (1-\lambda_{\sigma}^{ n+1 }):
C^n (A) \to C^n(A) \right) 
$$
denote the space of {\it twisted Hochschild n-cochains} on $A$. Define
the {\it twisted Hochschild coboundary} operator  $b_{\sigma}:
C^n_{\sigma} (A) \to  C^{n+1}_{\sigma} (A)$   and the operator $b_{\sigma}':
C^n_{\sigma} (A) \to  C^{n+1}_{\sigma} (A)$ by
\begin{align*}
b_{\sigma} \varphi (a_0, \cdots, a_{n+1}) &= \sum_{i=0}^n (-1)^{i}\varphi (a_0, \cdots,
a_ia_{i+1}, \cdots, a_{n+1}) \\
& \qquad\qquad\qquad + (-1)^{n+1}\varphi (\sigma (a_{n+1}) a_0, a_1, \cdots, a_{n-1}),\\
b_{\sigma}' \varphi (a_0, \cdots, a_{n+1}) &= \sum_{i=0}^n (-1)^{i}\varphi (a_0, \cdots,
a_ia_{i+1}, \cdots, a_{n+1}).
\end{align*}
One checks that  $b_{\sigma}$ sends twisted cochains to twisted
cochains. The cohomology of the complex $(C^*_{\sigma} (A), b_{\sigma})$
is the  {\it twisted Hochschild cohomology} of $A$. We also need the twisted cyclic
cohomology.  An $n$-cochain $\varphi \in C^n (A)$ is called {\it twisted cyclic} if $(1-\lambda_{\sigma}) \varphi =0$, or, equivalently
$$
\varphi (\sigma (a_n), a_0, \cdots, a_{n-1})= (-1)^n  \varphi (a_0,
a_1, \cdots, a_{n}) ,
$$
for all $a_0, a_1, \dots, a_n$ elements in $A$. 
Denote the space of cyclic $n$-cochains by $C^n_{\lambda, \sigma} (A)$.
Obviously $C^n_{\lambda, \sigma} (A) \subset C^n_{\sigma} (A)$. The
relation $ (1-\lambda_{\sigma}) b_{\sigma}  = b'_{\sigma}
(1-\lambda_{\sigma})$ shows that the operator $b_{\sigma}$ preserves the
space of cyclic cochains and we obtain the twisted cyclic complex of the
pair $(A, \sigma)$, denoted    $(C^n_{\lambda_\sigma} (A), b_{\sigma})$. The
cohomology of the  twisted cyclic complex  is called the {\it twisted cyclic cohomology} of $A$,
and denoted $\mathrm{HC}^{\bullet}_\sigma(A)$.
\begin{defi}
A twisted Hochschild $2n$-cocycle $\varphi$ on an $*$-algebra $A$ is called {\it twisted positive} if the  
pairing:
$$\langle a_0\otimes a_1\otimes \cdots\otimes a_n, \,
  b_0\otimes b_1\otimes \cdots\otimes b_n \rangle := \varphi (
 \sigma(b_0^*) a_0, a_1, \cdots a_n, b_n^*,  \cdots,  b_1^*).$$
 defines a positive sesquilinear form on the vector space $A^{\otimes (n+1)}$.
\end{defi} 

\medskip
Let us now go back to the quantum projective line $\CP_q^1$.
Let $ h: \ASU \to \IC$ denote the normalized Haar state of $\SU$. 
It is a positive twisted trace obeying 
$$ 
h(x y)= h(\sigma (y)x) , \qquad \textup{for} \quad x, y \in \ASU , 
$$ 
with (modular) automorphism $\sigma: \ASU \to \ASU$ given by
$$ 
\sigma (x)=K^2 \lt x \rt K^2 .
$$
(cf. \cite[Prop.~4.15]{KlimykS}). When restricted to $\Apq$, it induces the automorphism  
$$
\sigma: \CP_q^1 \to \CP_q^1, \qquad \sigma (x)= x \rt K^2  , \qquad \textup{for} \quad x \in \Apq .
$$
The bi-invariance of $h$ on $\ASU$ reduces to left invariance on $\Apq$, that is to say:
$$
(\id \otimes h) \circ \Delta_L (x) = h(x) \, 1_{\Apq}, \qquad \textup{for} \quad x\in \Apq ,
$$
where $\Delta_L$ is the coaction of $\ASU$ on $\Apq$ alluded to at the end of \S\ref{qdct}. Dually, there is invariance for the right action of $\su$ on $\Apq$:
$$
h(x \rt v) = \varepsilon(v) h(x) , \qquad \textup{for} \quad x\in \Apq ,  \; v \in \su .
$$

With $\omega_{-}\wedge\omega_{+}$ the central generator of $\Omega^2({\pq})$, $h$ the Haar state on $\Apq$ and $\sigma$ its above modular automorphism, the linear functional 
$$
\int_h  : \;\; \Omega^2({\pq}) \to \IC, \qquad \int_h a \, \omega_{-}\wedge\omega_{+} := h(a), 
$$
defines \cite{SW04} a non-trivial twisted cyclic $2$-cocycle $\tau$ on $\Apq$ by 
$$
\tau(a_0,a_1,a_2):= \frac{1}{2} \int_h a_0\, \dd a_1 \wedge \dd a_2 .
$$
The non-triviality means that there is no twisted cyclic 1-cochain $\alpha$ on $\Apq$ such that
$b_\sigma \alpha = \tau$ and $\lambda_\sigma \alpha= \alpha$.  
Thus $\tau$ is a non-trivial class in $\mathrm{HC}^2_\sigma(\pq)$.

\begin{prop}\label{tpos}
The cochain $\varphi \in C^2 (\Apq)$ defined by 
$$
\varphi (a_0, a_1, a_2)=\int_h a_0\, \dol a_1 \, \dolb a_2
$$
is a twisted Hochschild 2-cocycle on $\Apq$, that is to say $b_{\sigma} \varphi =0$ and  
$\lambda^3_\sigma \varphi = \varphi$; it is also positive, with positivity expressed as:
$$ 
\int_h a_0\, \dol a_1 (a_0 \, \dol a_1)^* \geq 0
$$
for all $a_0, a_1 \in \Apq$.
\end{prop}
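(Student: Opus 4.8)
**The plan is to verify the three claims---the twisted cocycle property $b_\sigma\varphi=0$, the twisted cyclicity $\lambda_\sigma^3\varphi=\varphi$, and the positivity---by reducing everything to properties of the Haar state $h$ and the first-order calculus.** The key technical tool is the twisted trace identity $h(xy)=h(\sigma(y)x)$ together with the graded Leibniz rule for $\dd=\del+\delbar$ and the fact that $\del a_1\wedge\delbar a_2$ lives in the central one-dimensional module $\Omega^2(\pq)=\Apq\,\omega_-\wedge\omega_+$. Since $\del$ raises the holomorphic degree and $\delbar$ the anti-holomorphic one, the only surviving contribution to $\int_h a_0\,\del a_1\,\delbar a_2$ is the genuine $(1,1)$-part, so I can work with the explicit formulas $\del f=(X_+\lt f)\omega_+$, $\delbar f=(X_-\lt f)\omega_-$ from \eqref{dedeb}.

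For the cocycle property, I would write out $b_\sigma\varphi(a_0,a_1,a_2,a_3)$ explicitly. The guiding principle is that $\varphi$ is built from the \emph{closed} graded trace $\int_h$, so the computation should mirror the classical proof that $a_0\,\dd a_1\wedge\dd a_2$ gives a Hochschild cocycle, but now bookkeeping the holomorphic/anti-holomorphic splitting. Concretely, the terms $\del(a_1a_2)=(\del a_1)a_2+a_1(\del a_2)$ and $\delbar(a_1a_2)=(\delbar a_1)a_2+a_1(\delbar a_2)$ must be expanded and the pieces recombined; the twist enters only in the last term $\varphi(\sigma(a_3)a_0,a_1,a_2)$, where the modular automorphism $\sigma(x)=x\rt K^2$ is exactly what is needed to absorb the failure of $h$ to be an honest trace. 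I expect the four signed terms to cancel in pairs once the twisted trace property is invoked to cyclically move $\sigma(a_3)$ past the rest.

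For twisted cyclicity I would check $\lambda_\sigma^3\varphi=\varphi$ directly, which by the remark $\lambda_\sigma^{n+1}\varphi(a_0,\dots,a_n)=\varphi(\sigma(a_0),\dots,\sigma(a_n))$ amounts to the $\sigma$-invariance identity $\varphi(\sigma(a_0),\sigma(a_1),\sigma(a_2))=\varphi(a_0,a_1,a_2)$. This should follow because $\sigma$ is induced by the action of the grouplike $K^2$, which is a \emph{calculus automorphism}: it commutes with $\del$ and $\delbar$ up to the scaling recorded in the bimodule relations \eqref{bi1}, and the Haar state is bi-invariant, so the $q$-powers accumulated by $\sigma$ acting on the three arguments must conspire to cancel against the central generator $\omega_-\wedge\omega_+$. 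Verifying this cancellation is where I would slot in the explicit $K$-eigenvalues.

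The positivity statement is the heart of the matter and the step I expect to be the main obstacle. The claimed inequality $\int_h a_0\,\del a_1\,(a_0\,\del a_1)^*\ge 0$ is a special case of the twisted-positive sesquilinear pairing with $\eta=\omega$, $a_0\otimes a_1\mapsto a_0\,\del a_1\in\Omega^{(1,0)}(\pq)$; using the lemma $(\del f)^*=\delbar f^*$ one sees $(a_0\,\del a_1)^*=\delbar(a_1^*)\,a_0^*=(\delbar a_1^*)a_0^*$, so the integrand becomes a genuine $(1,1)$-form $a_0(\del a_1)(\delbar a_1^*)a_0^*$. The crux is then to show that $\int_h$ of such a form is nonnegative. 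I would realize this by expressing $\del a_1=(X_+\lt a_1)\omega_+$ so that the integrand is (up to a positive central factor and the commutation relations \eqref{bi1}) of the shape $h\big(a_0\,(X_+\lt a_1)(X_+\lt a_1)^*a_0^*\big)$, which is manifestly of the form $h(\xi\xi^*)$ with $\xi=a_0(X_+\lt a_1)$; positivity and faithfulness of the Haar state $h$ then give the result. The genuine difficulty is controlling the $q$-powers produced when commuting $\omega_+$ through $a_0^*$ via \eqref{bi1} and checking that the modular twist $\sigma$ in the definition of twisted positivity is precisely compensated so that the pairing collapses to a manifestly positive expression $h(\xi\sigma(\xi)^*)$ or $h(\xi\xi^*)$; getting these exponents to match is the delicate bookkeeping on which the whole argument turns.
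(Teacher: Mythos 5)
Your plan is correct and follows essentially the same route as the paper: the cocycle property reduces via the Leibniz rule to the identity $\int_h a_0(\del a_1\,\delbar a_2)a_3=\int_h \sigma(a_3)a_0\,\del a_1\,\delbar a_2$ (which the paper isolates as a preliminary lemma, proved from centrality of $\omega_-\wedge\omega_+$ and the twisted trace property of $h$), cyclicity follows from right-invariance of $h$ and of the left-invariant forms, and positivity collapses to $q^{2}h\bigl(a_0y\,(a_0y)^*\bigr)\ge 0$ with $\del a_1=y\,\omega_+$, exactly the $h(\xi\xi^*)$ form you anticipate. The "delicate bookkeeping" you worry about is harmless, since commuting $\omega_\pm$ past elements of the $\L_{\pm2}$ modules only produces an overall positive power of $q$.
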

Before giving the proof we prove a preliminary result. 
\begin{lemm} 
For any $a_0, a_1, a_2 , a_3 \in \Apq$ it holds that:
$$
\int_h a_0 ( \dol a_1 \dolb a_2 ) a_3 = \int_h \sigma (a_3) a_0  \dol a_1 \dolb a_2 .
$$
\end{lemm}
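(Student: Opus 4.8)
The plan is to exploit the structural fact, established in Proposition~\ref{2dsph}, that the top-degree forms $\Omega^2(\pq) = \Apq\,\omega$ constitute a \emph{free} rank-one $\Apq$-module generated by the \emph{central} element $\omega = \omega_{-}\wedge\omega_{+}$. This reduces the whole identity to the twisted trace property of the Haar state $h$.

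First I would observe that $\dol a_1\,\dolb a_2$ is the product of a $(1,0)$-form and a $(0,1)$-form, hence a $(1,1)$-form lying in $\Omega^2(\pq) = \Apq\,\omega$. By freeness of this module there is a unique $g \in \Apq$ with $\dol a_1\,\dolb a_2 = g\,\omega$. There is no need to compute $g$ explicitly: using $\dol a_1 = (X_{+}\lt a_1)\,\omega_{+}$ with $X_{+}\lt a_1 \in \cl_{2}$, $\dolb a_2 = (X_{-}\lt a_2)\,\omega_{-}$ with $X_{-}\lt a_2 \in \cl_{-2}$, the bimodule relations \eqref{bi1} to commute $\omega_{+}$ past the $\cl_{-2}$-factor, and the identity $\omega_{+}\wedge\omega_{-} = -q^{2}\,\omega$ from \eqref{crc3}, one checks that $g \in \cl_{2}\cl_{-2} = \cl_0 = \Apq$ (using $\cl_n\cl_m = \cl_{n+m}$ from Proposition~\ref{masu}).

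Next, since $\omega$ is central, $\omega\,a_3 = a_3\,\omega$, so that
$$
a_0\,(\dol a_1\,\dolb a_2)\,a_3 = a_0\,g\,a_3\,\omega,
\qquad\text{hence}\qquad
\int_h a_0\,(\dol a_1\,\dolb a_2)\,a_3 = h(a_0\,g\,a_3),
$$
by the defining property of $\int_h$. In the same way the right-hand side becomes $\int_h \sigma(a_3)\,a_0\,\dol a_1\,\dolb a_2 = h(\sigma(a_3)\,a_0\,g)$. The lemma then follows from the twisted trace property $h(xy) = h(\sigma(y)x)$ applied with $x = a_0\,g$ and $y = a_3$, which gives exactly $h(a_0\,g\,a_3) = h(\sigma(a_3)\,a_0\,g)$.

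The only point that genuinely requires care is establishing that the coefficient $g$ lies in $\Apq$ rather than merely in some $\cl_n$; this is what makes the reduction to $h$ legitimate, and it is immediate from the rank-one freeness of $\Omega^2(\pq)$. Beyond this there is no substantial obstacle: the content of the lemma is precisely that the modular (twisted-trace) property of $h$ propagates to the integral $\int_h$ through the central top form, and the proof is a direct unwinding of the definition of $\int_h$ together with the twisted trace identity for $h$.
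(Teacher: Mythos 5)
Your proof is correct and follows essentially the same route as the paper: write $\dol a_1\,\dolb a_2 = g\,\omega_-\wedge\omega_+$ with $g\in\Apq$, use centrality of the top form to move $a_3$ past it, and conclude with the twisted trace property $h(xy)=h(\sigma(y)x)$. Your extra verification that the coefficient $g$ indeed lies in $\Apq$ is a welcome bit of care that the paper leaves implicit.
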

\begin{proof}
Write $ \dol a_1 \dolb a_2 = y \, \omega_- \wedge \omega_+$, for some $y \in \Apq$. 
Using the fact that 
$ \omega_- \wedge \omega_+$ commutes with elements in $\Apq$, we have that 
\begin{align*}
\int_h a_0 ( \dol a_1 \dolb a_2 ) a_3 - \int_h \sigma (a_3) a_0  \dol a_1 \dolb a_2 &= \int_h a_0 y \, \omega_- \wedge \omega_{+} a_3 - \int_h \sigma (a_3) a_0 y \, \omega_- \wedge \omega_{+}  \\
& = \int_h a_0 y a_3 \, \omega_- \wedge \omega_+  - \int_h \sigma (a_3) a_0 y \, \omega_- \wedge \omega_+  \\
& = h (a_0 ya_3)- h(\sigma (a_3) a_0 y)= 0  
 \end{align*}
from the twisted property of the Haar state.
\end{proof}

\begin{proof}{\it of Proposition~\ref{tpos}. }~\\
Using the derivation property of $\partial$ and $ \bar{\partial}$ we have that 
\begin{multline*} 
(b_{\sigma} \varphi )(a_0, a_1, a_2, a_3) 
  = \int_h a_0 a_1 \dol a_2  \dolb a_3
- \int_h a_0 \dol (a_1 a_2) \dolb a_3 \\
 + \int_h a_0 \dol a_1 \dolb (a_2 a_3) - \int_h \sigma (a_3) a_0
\dol  a_1  \dolb a_2  
   =  \int_h a_0 ( \dol a_1 \dolb a_2 ) a_3 - \int_h \sigma (a_3) a_0  \dol a_1 \dolb a_2 = 0 ,
\end{multline*}
from the previous Lemma. 

\noindent
Next, the cyclic condition follows from invariance of the Haar state and of the calculus. Indeed, from the commutativity of the left and right $\su$-actions it holds that: 
$$
\varphi (\sigma(a_0), \sigma(a_1), \sigma(a_2) ) = \int_h \sigma(a_0)\, \dol \sigma(a_1) \, \dolb \sigma(a_2) = \int_h \sigma\left (a_0 \, \dol a_1  \, \dolb  a_2 \right);
$$
writing $a_0 \, \dol a_1  \, \dolb  a_2 = y \, \omega_- \wedge \omega_+$, for some $y \in \Apq$, left $\su$invariance of the forms $\omega_\pm$, that is $ \omega_\pm \rt K = \omega_\pm $ (part of the dual invariance statement of \eqref{invfor}), yields $\sigma\left (a_0 \, \dol a_1  \, \dolb  a_2 \right) = 
\sigma(y) \, \omega_- \wedge \omega_+ $ and in turn,
\begin{align*}
\varphi (\sigma(a_0), \sigma(a_1), \sigma(a_2) ) & = \int_h \sigma(y) \, \omega_- \wedge \omega_+ = h(\sigma(y))  = h(y \rt K^2) = h(y) = \int_h  y  \, \omega_- \wedge \omega_+ \\ 
& = \int_h a_0 \, \dol a_1  \, \dolb  a_2 = \varphi (a_0, a_1, a_2).
\end{align*}
Finally, for the twisted positivity of $\varphi$, the hermitian scalar product on $\Omega^{(1,0)}(\CP_q^1)$, 
$$
\langle  a_0  \partial a_1, b^0  \partial b^1 \rangle: = \varphi(\sigma(b_0^*) a_0, a_1, b_1^*) =
\int_h \sigma(b_0^*) a_0 \, \dol a_1\, \dolb b_1^*,
$$
determines a positive sesquilinear form if for all $a_0, a_1 \in A(\CP_q^1)$ it holds that
$$ 
\int_h \sigma(a_0^*) a_0  \dol a_1\, \dolb a_1^* = \int_h a_0 \, \dol a_1 (a_0 \, \dol a_1)^* \geq 0 .
$$
The first equality follows again from the Lemma. Indeed,  
\begin{align*} 
\int_h a_0 \dol a_1 (a_0  \partial a_1)^*   = \int_h a_0 \dol a_1 (\partial a_1)^* a_0^* =
\int_h \sigma(a_0^*) a_0 \dol a_1 \dolb a_1^*. 
\end{align*} 
Then, if $ \dol a_1= y\omega_+$ it follows that  $ \dolb a_1^* = (\dol a_1)^* = - \omega_- y^*$; then
\begin{align*}
\int_h \sigma(a_0^*) a_0 \, \dol a_1\, \dolb a_1^* & = - \int_h \sigma(a_0^*) a_0 \, y\, \omega_+ \wedge \omega_- y^* 
= q^{2} \int_h \sigma(a_0^*) a_0 \, y\, y^* \, \omega_- \wedge \omega_+  
\\ & = q^{2} h(\sigma(a_0^*) a_0 \, y y^*) = q^{2} h (a_0 y y^* (a_0)^*) = q^{2} h ( a_0 y  (a_0 y^*)^* ) \geq 0, 
\end{align*}
the positivity being evident.
\end{proof}
 \begin{prop} The twisted Hochschild cocycles $\tau$ and $\varphi$ are cohomologous. 
\end{prop}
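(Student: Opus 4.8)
The plan is to produce an explicit twisted Hochschild $1$-cochain $\psi$ with $b_\sigma\psi = \tau-\varphi$, so that the two cocycles represent the same class. First I would rewrite $\tau$ in the bigrading. Since $\dd=\dol+\dolb$ and the calculus on $\pq$ is complex one-dimensional, we have $\Omega^{(2,0)}(\pq)=\Omega^{(0,2)}(\pq)=0$ (equivalently $\omega_+\wedge\omega_+=\omega_-\wedge\omega_-=0$), so the middle-degree form splits as
$$
\dd a_1\wedge\dd a_2 = \dol a_1\,\dolb a_2 + \dolb a_1\,\dol a_2 .
$$
Setting $\varphi'(a_0,a_1,a_2):=\int_h a_0\,\dolb a_1\,\dol a_2$, this gives $\tau=\tfrac12(\varphi+\varphi')$, hence $\tau-\varphi=\tfrac12(\varphi'-\varphi)$. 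Proving the statement therefore reduces to exhibiting $\tfrac12(\varphi'-\varphi)$ as a twisted coboundary.

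Next I would introduce the candidate primitive
$$
\psi(a_0,a_1):=\tfrac12\int_h a_0\,\dol\dolb a_1 ,
$$
which is well defined because $\dol\dolb a_1\in\Omega^{(1,1)}(\pq)=\Omega^2(\pq)$. Before computing its coboundary I must verify that $\psi$ is a genuine twisted $1$-cochain, i.e. $\lambda_\sigma^2\psi=\psi$, which amounts to $\int_h\sigma(a_0)\,\dol\dolb\sigma(a_1)=\int_h a_0\,\dol\dolb a_1$. This is the same invariance argument used for the cyclic condition in the proof of Proposition~\ref{tpos}: since $\sigma=(\,\cdot\,\rt K^2)$ commutes with the left actions $X_\pm\lt$ defining $\dol,\dolb$ and the invariant forms satisfy $\omega_\pm\rt K=\omega_\pm$, one gets $\dol\dolb\sigma(a_1)=\sigma(\dol\dolb a_1)$; writing $a_0\,\dol\dolb a_1=y\,\omega_-\wedge\omega_+$ and using the twisted trace property together with $h(y\rt K^2)=h(y)$ then yields the claim.

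The core computation is $b_\sigma\psi$. Expanding with the graded Leibniz rule and $\dol^2=\dolb^2=0$ gives
$$
\dol\dolb(a_1a_2)=(\dol\dolb a_1)a_2-(\dolb a_1)(\dol a_2)+(\dol a_1)(\dolb a_2)+a_1(\dol\dolb a_2) ,
$$
so that in
$$
b_\sigma\psi(a_0,a_1,a_2)=\psi(a_0a_1,a_2)-\psi(a_0,a_1a_2)+\psi(\sigma(a_2)a_0,a_1)
$$
the two ``diagonal'' contributions cancel: the term $\int_h a_0a_1(\dol\dolb a_2)$ against $\psi(a_0a_1,a_2)$, and the term $\int_h a_0(\dol\dolb a_1)a_2$ against $\psi(\sigma(a_2)a_0,a_1)$ by the preceding Lemma applied to the central $2$-form $\dol\dolb a_1$. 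What survives is exactly $\tfrac12(\varphi'-\varphi)=\tau-\varphi$, so $b_\sigma\psi=\tau-\varphi$ and the two cocycles are cohomologous.

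I expect the main obstacle to be purely bookkeeping: tracking the graded-Leibniz signs in the expansion of $\dol\dolb(a_1a_2)$ and matching them against the three terms of $b_\sigma\psi$, where the decisive structural input is the Lemma, which moves a function past a central top-form at the cost of the modular twist $\sigma$. Once the invariance check on $\psi$ and this sign bookkeeping are settled, the identity $\tau-\varphi=b_\sigma\psi$ follows at once.
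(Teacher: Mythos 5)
Your proposal is correct and follows essentially the same route as the paper: the same primitive $\psi(a_0,a_1)=\tfrac12\int_h a_0\,\dol\dolb a_1$, the same invariance check that $\psi$ is a twisted cochain, the same use of the Lemma to move $\sigma(a_2)$ back to the right of the top form, and the same graded-Leibniz expansion of $\dol\dolb(a_1a_2)$. Your intermediate bookkeeping via $\varphi'$ and $\tau=\tfrac12(\varphi+\varphi')$ is only a cosmetic repackaging of the paper's direct comparison of $\tfrac12\,a_0\,\dd a_1\wedge\dd a_2$ with $a_0\,\dol a_1\,\dolb a_2$.
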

\begin{proof}Let us define a twisted Hochschild 1-cochain $\psi$ on
$\Apq$ by
$$\psi (a, b) = \frac{1}{2}\, \int_h a \, \dol \dolb (b) ,
$$
for $a,b\in\Apq$.
It is a twisted cochain since
\begin{align*} 
2\, \psi (\sigma (a), \sigma (b)) &= \int_h \sigma (a)   \dol \dolb (\sigma
(b)) =\int_h \sigma (a)   \sigma (\dol \dolb (b)) \\
& = 
 \int_h  \sigma ( a \dol \dolb (b))= \int_h a\, \dol \dolb (b) = 2\, \psi (a,b),
\end{align*}
for the invariance of the integral as seen before.
We have
 \begin{align*} 
 (b_{\sigma} \psi ) (a_0, a_1, a_2) &=  \psi (a_0 a_1, a_2)- \psi (a_0,
a_1 a_2) + \psi (\sigma (a_2) a_0, a_1) \\
 & = \frac{1}{2}\, \int_h (a_0 a_1 \dol \dolb (a_2) -a_0 \dol \dolb (a_1 a_2) + \sigma
(a_2) a_0  \dol \dolb (a_1)  \\
& = \frac{1}{2}\, \int_h (a_0 a_1 \dol \dolb (a_2) -a_0 \dol \dolb (a_1 a_2) + 
a_0  (\dol \dolb (a_1)) a_2.
\end{align*} 
On the other hand:
\begin{align*}
\frac{1}{2}\, a_0 \dd a_1 \wedge \dd a_2 =  a_0 \dol a_1 \dolb a_2 
 + \frac{1}{2}\, a_0 \left( - \dol \dolb (a_1 a_2) + (\dol \dolb a_1) a_2 + a_1 \dol
\dolb a_2 \right) .
\end{align*}
Comparing these last two relations, we find that
$$ 
\frac{1}{2}\, \int_h a_0 \dd a_1 \wedge \dd a_2 = \int_h a_0 \del a_1 \dolb a_2 + (b_{\sigma}
\psi) (a_0, a_1, a_2) 
$$
or $\varphi -\tau = b_{\sigma} \psi$, as stated. 
\end{proof} 

It is worth stressing that $\varphi$ is not a twisted cyclic cocycle, only a Hochschild one.
In fact, it is trivial as a twisted Hochschild cocycle since it is known that for the modular
automorphism $\sigma$, the twisted Hochschild cohomology of the algebra $\Apq$ is trivial \cite{had07}.
To get non-trivial twisted Hochschild cohomology one needs twisting with
the inverse modular automorphism.

\section{Final remarks}

We have shown that much of the structure of the 2-sphere as a complex curve, or equivalently as a conformal manifold, actually survive the $q$-deformation and have natural generalizations on the quantum 2-sphere. Chiefly among these is the identification of a quantum homogeneous coordinate ring with the coordinate ring of the quantum plane. 
Also, in parallel with the fact that positive Hochschild cocycles on the algebra of smooth functions on a compact oriented 2-dimensional manifold encode 
the information for complex structures on the surface \cite[Section VI.2]{co94}, 
we have formulated a notion of twisted positivity for twisted Hochschild and cyclic cocycles 
-- given that there are no non-trivial 2-dimensional cyclic cocycles on the quantum 2-sphere 
-- and exhibited an example of a twisted positive  Hochschild cocycle in the case of our complex structure on this sphere. Now, additional tools of noncommutative geometry are available there, including the abstract perturbation of conformal structures by Beltrami differentials as explained in \cite[Example 8, Section VI.4]{co94}.
A great challenge is to prove an analogue of the measurable Riemann mapping theorem in the 
$q$-deformed case. The formalism of $q$-groups allows one to set-up a simple algebraic framework but the real challenge resides in the analysis. An attack of these problems should await a future time.

\medskip 
\begin{center}
\textsc{Acknowledgments} 
\end{center}
We thank the generous hospitality of the Hausdorff Research Institute for
Mathematics in Bonn where this work started during the summer of 2008. We are grateful to A. Connes for discussions that lead us to correct a mistake in a preliminary  version, and to S. Brain and U. Kr\"ahmer for several valuable comments.
GL was partially supported by the `Italian project Cofin06 - Noncommutative geometry, quantum groups and applications'. The paper was finished while GL was a `Special Guest' of Chengdu MCDC
and of Sichuan University, Chengdu, P.R. China; he was overwhelmed by the hospitality there. 

%\newpage

\end{document}